\theoremstyle{plain}
\newtheorem*{theorem*}{Theorem}
\newtheorem{thmA}{Theorem}
\newtheorem{claim}{Claim}[section]
\newtheorem{theorem}{Theorem}[section]
\theoremstyle{definition}
\newtheorem{D}[theorem]{Definition}
\newtheorem{cor}[theorem]{Corollary}
\newtheorem{prop}[theorem]{Proposition}
\theoremstyle{definition}
\newtheorem{rem}[theorem]{Remark}
\newtheorem{rems}[theorem]{Remarks}
\newcommand{\R}{\ensuremath{\mathbb R}}
\newcommand{\N}{\ensuremath{\mathbb N}}
\newcommand{\Z}{\ensuremath{\mathbb Z}}
\newcommand{\bH}{\ensuremath{\mathbb H}}
\newcommand{\eps}{\ensuremath{\varepsilon}}
\newcommand{\Ric}{\ensuremath{\mbox{Ric}}}
\newcommand{\Ricm}{\ensuremath{\mbox{Ric}_{\mbox{\tiny{-}}}}}
\newcommand{\katoD}{\ensuremath{\mbox{k}_{D^2}(M^n,g)}}
\newcommand{\setR}{\mathbb{R}}
\newcommand{\bT}{\mathbb{T}}
\newcommand{\cD}{\mathcal{D}}\newcommand{\cF}{\mathcal{F}}
\newcommand{\Id}{\mathrm{Id}}
\newcommand{\di}{\mathop{}\!\mathrm{d}}
\newcommand{\dist}{\mathsf{d}}
\newcommand{\diam}{\mathrm{diam}}
\DeclareMathOperator{\RCD}{RCD}
\newfont{\tmpf}{cmsy10 scaled 2500}
\newcommand{\bGamma}{{\bf \Gamma}}
\def\R{\mathbb R}\def\N{\mathbb N}\def\bB{\mathbb B}
\def\cA{\mathcal A}
\def\cC{\mathcal C}
\def\cD{\mathcal D}
\def\cH{\mathcal H}
\newcommand{\df}{\coloneqq}
\newcommand{\fd}{\eqqcolon}
\title[Torus stability under Kato bounds on the Ricci curvature]{Torus stability under Kato bounds on the Ricci curvature}
\author{Gilles Carron}
\address{G. Carron, Nantes Université, CNRS, Laboratoire de Mathématiques Jean Leray, LMJL, UMR 6629, F-44000 Nantes, France.} 
\email{Gilles.Carron@univ-nantes.fr}
\author{Ilaria Mondello}
\address{I. Mondello, Université Paris Est Cr\'eteil, Laboratoire d'Analyse et Math\'ematiques appliqu\'es, UMR CNRS 8050, F-94010 Creteil, France.}
\email{ilaria.mondello@u-pec.fr}
\author{David Tewodrose}
\address{D. Tewodrose, Nantes Université, CNRS, Laboratoire de Mathématiques Jean Leray, LMJL, UMR 6629, F-44000 Nantes, France.}
\email{David.Tewodrose@univ-nantes.fr}
\date{}
\begin{document}
%\nocite{*} 

\maketitle

\begin{abstract}
We show two stability results for a closed Riemannian manifold whose Ricci curvature is small in the Kato sense and whose first Betti number is equal to the dimension. The first one is a geometric stability result stating that such a manifold is Gromov-Hausdorff close to a flat torus. The second one states that, under a stronger assumption,  such a manifold is diffeomorphic to a torus: this extends a result by Colding and Cheeger--Colding obtained in the context of a lower bound on the Ricci curvature.
\end{abstract}

%\hfill

%\textbf{MSC Classification:} 53C21,  53C23, 53C24.

\section{Introduction}
The celebrated Bochner theorem states that if a closed Riemannian manifold $(M^n,g)$ has non-negative Ricci curvature then its first Betti number satisfies
$$b_1(M)\le n,$$ with equality if and only if $(M^n,g)$ is isometric to a flat torus. This inequality was improved by Gromov \cite{Gromov1} and Gallot \cite{Gallot2} who found $\eps(n)>0$ such that if 
$$\Ric\ge -\frac{\eps(n)}{\mathrm{diam}(M,g)^2}\, g,$$then $b_1(M)\le n.$ Gromov also made a conjecture about the equality case, which was proven true by Colding \cite{Col97} and Cheeger--Colding \cite{ChCo97}: there exists $\updelta(n)>0$ such that if $(M^n,g)$ is a closed Riemannian manifold of diameter $D$ satisfying 
$$b_1(M)= n, \qquad  \Ric\ge -\frac{\updelta(n)}{D^2}\, g,$$
then $M$ is diffeomorphic to a torus.  The proof of this latter result consists in two steps. First, one shows that for any $\eps\in (0,1)$ there exists $\updelta(n,\eps)>0$ such that if $(M^n,g)$ with diameter $D$ satisfies
$$b_1(M)= n, \qquad  \Ric\ge -\frac{\updelta(n,\eps)}{D^2}\, g,$$
then there exists a flat torus $\bT^n$ and an $\eps D$-almost isometry\footnote{for any $\upeta>0$, a $\upeta$-almost isometry is a Borel map $\Phi$ between two metric spaces $(X,d_X)$ and $(Y,d_Y)$ such that: (i) $\left| d_X(x_0,x_1)- d_Y(\Phi(x_0),\Phi(x_1))\right|\le \upeta$ for any $x_0,x_1\in X$, (ii) for any $y \in Y$ there exists $x \in X$ such that $ d_Y(\phi(x),y)\le \upeta$} $\Phi\colon  M\rightarrow \bT^n$. Secondly,  the intrinsic Reifenberg theorem of Cheeger--Colding allows to prove a topological stability result: if $\eps$ is sufficiently small, then $M$ is diffeomorphic to $\bT^n$ (see \cite[Theorem A.1.1. and Theorem A.1.13]{CheegerColdingI}). For more details, we refer to the very instructive texts \cite{Gallot, ChPisa} presenting the work of Cheeger and Colding.

The Bochner estimate has been generalized in several directions.  For a Riemannian manifold $(M^n,g)$, let $\Ricm : M \to \setR_+$ be the lowest non-negative function such that for any $x \in M$,
\[
\Ric_x  \ge - \Ricm(x)g_x.
\]
Then Gallot obtained in  \cite{GallotInt}  that for every $p>n/2$, there exists $\eps(n,p)>0$ such that if $(M^n,g)$ with diameter $D$ satisfies
$$D^2\left(\fint_M \Ricm^p \di \nu_g\right)^{\frac1p}\le \eps(n,p),$$ then $b_1(M)\le n$; here and throughout, $\nu_g$ is the Riemannian volume measure induced by $g$ on $M$, and $\fint_A f \di \nu_g \coloneqq \nu_g(A)^{-1}\int_A f \di \nu_g$ for any Borel set $A \subset M$ and any measurable function $f$ defined on $A$.  To our knowledge,  no topological rigidity result has been obtained so far from this integral condition. It seems to us that the segment inequality proven in \cite{Chen:2020aa}  and the results from \cite{PW1,PW2} may imply such a  rigidity result. Another direction is the one of metric measure spaces satisfying a suitable synthetic Ricci curvature lower bound. In this context, a rigidity result \`a la Bochner and geometric stability results hold, see \cite{MR3814057,MR3987869,Mondello:2021aa}.

In this paper, we obtain geometric and topological results under a Kato bound. More precisely, let us introduce the following definition.

\begin{D} Let $(M^n,g)$ be a complete Riemannian manifold with heat kernel $H(t,x,y)$ and diameter $D$. For any $T>0$, we set
$$\mbox{k}_T(M^n,g):=\sup_{x\in M} \iint_{[0,T]\times M} H(t,x,y)\Ricm(y)\di\nu_g(y) \di t.$$
We say that  the number $\mbox{k}_{D^2}(M^n,g)$ is the Kato constant of $(M^n,g)$.
\end{D}

The first occurence of $\mbox{k}_T(M^n,g)$ in the study of Riemannian manifolds seems to be \cite{MR3412360}. The geometric and analytic consequences of a bound on $\mbox{k}_T(M^n,g)$ have been extensively studied since then, see e.g.~\cite{Rose,rose2018manifolds,C16,rosewei,CarronRose,braun2021heat,CMT,CMT2}. It is useful to note that if $\Ric\ge -\kappa^2 g$ then $\mbox{k}_T(M^n,g)\le \kappa^2 T$, hence a smallness assumption on $\mbox{k}_T(M^n,g)$ should be understood as a control on the part of the manifold where $\Ricm \gtrsim T^{-1}.$

The Bochner estimate extends to the case of Riemannian manifolds with small Kato constant,  as proven in \cite{Rose} and improved in \cite{C16}: there exists $\updelta(n)>0$ such that if $(M^n,g)$ is a closed Riemannian manifold of diameter $D$ such that $\mbox{k}_{D^2}(M^n,g)\le \updelta(n)$, then  $b_1(M)\le n.$ Our first main result provides an answer to a question raised in \cite{C16} about the equality case.
\begin{thmA}\label{Thm:A} For any $\eps\in (0,1)$ there exists $\updelta(n,\eps)>0$ such that if $(M^n,g)$ is a closed Riemannian manifold of diameter $D$ satisfying
$$b_1(M)= n \qquad \text{and} \qquad \mbox{k}_{D^2}(M^n,g)\le \updelta(n,\eps),$$
then $M$ is $\eps D$-almost isometric to a flat torus.
\end{thmA}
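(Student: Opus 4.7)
The plan is to adapt Colding's harmonic-forms strategy from \cite{Col97,ChCo97} to the Kato-bounded setting. Since $b_1(M)=n$, Hodge theory provides an $n$-dimensional space $\cH^1(M)$ of harmonic $1$-forms; pick a basis $\omega_1,\dots,\omega_n$ orthonormal for the averaged pairing $\fint_M \la \cdot,\cdot\ra \di\nu_g$. Let $\hat M \to M$ be the torsion-free maximal abelian cover, whose deck group $\Gamma$ is isomorphic to $\Z^n$. Each $\omega_i$ pulls back to $du_i$ for a $\Gamma$-equivariant harmonic function $u_i\colon \hat M\to \R$, and the period homomorphisms $\gamma\mapsto u_i(\gamma\hat x)-u_i(\hat x)$ assemble into a lattice $\Lambda\subset \R^n$. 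Then $\Phi\coloneqq (u_1,\dots,u_n)\colon \hat M\to \R^n$ descends to $\bar\Phi\colon M\to \R^n/\Lambda\fd \bT^n$, and the task is to show that $\bar\Phi$ is an $\eps D$-almost isometry provided $\kato\le \updelta(n,\eps)$.

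The second step is a Bochner analysis under the Kato hypothesis. For each harmonic $1$-form $\omega_i$, the Bochner identity yields $\Delta |\omega_i|^2 \ge -2\Ricm |\omega_i|^2$, so $|\omega_i|^2$ is a subsolution of a Schr\"odinger-type inequality. The heat-kernel duality built into the definition of $\kato$, together with the elliptic and parabolic estimates developed in \cite{Rose,C16,CMT,CMT2}, delivers a pointwise bound
\[
\sup_M \bigl|\,|\omega_i|^2 - 1\,\bigr| + \fint_M |\nabla\omega_i|^2 \di\nu_g \le \psi(\kato),
\]
where $\psi(\tau)\to 0$ as $\tau\to 0$, with $\psi$ depending only on $n$. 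Applying the same analysis to the pairings $\la\omega_i,\omega_j\ra$ shows that the Gram matrix of $d\Phi$ is pointwise close to $\Id_n$; in particular, $d\Phi$ is $L^2$-close to an isometry onto $\R^n$.

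The third and main step upgrades this integral control on $d\Phi$ to a $C^0$ control on distances. Using the segment inequality under a Kato bound established in \cite{CMT,CMT2}, one averages the pointwise defect $|d\Phi - \mathrm{isom}|$ along minimizing geodesics to obtain
\[
\bigl|\,|\Phi(\hat x)-\Phi(\hat y)|_{\R^n} - \dist_{\hat M}(\hat x,\hat y)\,\bigr|\le \eps D
\]
for all $\hat x,\hat y \in \hat M$ with $\dist_{\hat M}(\hat x,\hat y)\le 3D$. Equivariance transfers this to $\bar\Phi$ on $M$, and a volume-comparison argument applied to the Jacobian $\det d\Phi \approx 1$ produces density of $\bar\Phi(M)$ in $\bT^n$ up to $\eps D$. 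A contradiction/compactness scheme then yields the quantitative threshold $\updelta(n,\eps)$. The main obstacle is this last step: under a Ricci lower bound one invokes Bishop--Gromov's pointwise Jacobian comparison along geodesics, which is unavailable here and must be replaced by parabolic heat-semigroup estimates propagating integral information on $\Ricm$ along geodesics, together with a Sobolev/Poincar\'e inequality that survives the smallness of the Kato constant.
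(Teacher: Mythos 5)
Your broad strategy (Albanese map, Bochner analysis, upgrade integral control on $d\Phi$ to a metric statement) is the right one, but there are three concrete gaps.

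First, Step 2 claims a pointwise bound $\sup_M\bigl|\,|\omega_i|^2-1\,\bigr|\le\psi(\kato)$. This is not available under a Kato hypothesis. The heat-kernel/Bochner argument only gives a pointwise upper bound $\|\omega_i\|_{L^\infty}^2\le (1+C(n)\sqrt[3]{\updelta})\fint_M|\omega_i|^2$ and an $L^1$ deviation bound $\fint_M\bigl|\,|\omega_i|^2-\fint_M|\omega_i|^2\,\bigr|\le C(n)\sqrt[3]{\updelta}$ (Proposition~\ref{prop:estiharm}); there is no pointwise lower bound on $|\omega_i|$, since a Kato bound permits regions where the Ricci curvature is very negative and the harmonic forms degenerate. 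Everything downstream that relies on a pointwise lower bound must be replaced by an integral-to-metric mechanism.

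Second, your Step 3 invokes ``the segment inequality under a Kato bound established in \cite{CMT,CMT2}''. No such segment inequality is proven in those references, and indeed the introduction of the present paper explicitly notes that a Kato-type segment inequality (as in \cite{Chen:2020aa}) is a separate ingredient which is \emph{not} used here. The paper's route is different: it proves a harmonic almost-splitting theorem (Theorem~\ref{thm:presEucl}) by a compactness/contradiction argument, passing to a pointed measured Gromov--Hausdorff limit $(X,\dist,\mu)$ on which $h$ is shown to be an exact isometry via the Li--Yau inequality, Varadhan's formula, and a maximum principle for the sub-harmonic function $|h(x)-h(\cdot)|^2-\dist^2(x,\cdot)$. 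This replaces the geodesic-averaging step entirely and is crucial because the Kato hypothesis is parabolic (heat-kernel averaged), not geodesic, so there is no a priori control along individual geodesics.

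Third, the sentence ``Equivariance transfers this to $\bar\Phi$ on $M$'' conceals the bulk of the argument. The lattice $\Lambda=\rho(\Gamma)$ has no a priori relationship to the scale $D$, so one cannot read off the almost-isometry of $M$ with $\R^n/\Lambda$ from an almost-isometry on a ball of radius $\sim D$ in $\widehat M$. The paper handles this by choosing generators $\gamma_1,\dots,\gamma_n\in\Gamma$ whose images $\rho(\gamma_i)$ are close to a scaled basis of $\R^n$, forming a finite-index subgroup $\Gamma_0$, bounding $\diam(\widehat M/\Gamma_0)$ via a Dirichlet-domain argument, proving $\cA_0\colon M_0\to\R^n/\rho(\Gamma_0)$ is an almost isometry, and then repeating the descent to $M$. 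Without this group-theoretic bookkeeping the passage from $\widehat M$ to $M$ does not follow.
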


\begin{rem} From \cite[Theorem 4.3]{CarronRose} we can replace the smallness assumption $\mbox{k}_{D^2}(M^n,g)\le \updelta(n,\eps)$ with an integral condition involving the Ricci curvature only, namely
$$\sup_{x \in M}\int_0^Dr\fint_{B_r(x)}\Ricm(y)\di\nu_g(y)\,\di r\le\updelta(n,\eps).$$ 
\end{rem}

Our second main result provides a topological stability theorem under a so-called strong Kato bound.  This assumption appeared naturally in our previous work \cite{CMT,CMT2} where we obtained, among other results, Reifenberg regularity.

\begin{thmA}\label{Thm:B}Let $f\colon [0,1]\rightarrow \R_+$ be a non-decreasing function satisfying
\begin{equation}\label{eq:SK}\tag{SK}
\int_0^1 \frac{\sqrt{f(t)}}{t} \di t<+\infty.
\end{equation}
 Then there exists $\updelta(n,f)>0$ such that if a closed Riemannian manifold $(M^n,g)$ of diameter $D$ satisfies
\[
b_1(M)=n, \qquad \mbox{k}_{D^2}(M^n,g)\le \updelta(n,f), 
\] 
and
 \[
\mbox{k}_{tD^2}(M^n,g)\le f(t)  \quad \text{for all $t \in (0,1]$},
 \]
then $M$ is diffeomorphic to a torus.
\end{thmA}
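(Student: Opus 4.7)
The plan is to follow the two-step strategy of Colding and Cheeger--Colding outlined in the introduction, using Theorem~\ref{Thm:A} as the geometric input for the ``almost isometry'' step and the strong Kato assumption \eqref{eq:SK} as the regularity input for the ``topological rigidity'' step.

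\emph{Step 1 (geometric reduction).} For $\eps \in (0,1)$ to be chosen only in terms of $n$ and $f$, apply Theorem~\ref{Thm:A}: whenever $\updelta(n,f) \le \updelta(n,\eps)$, the manifold $M$ is $\eps D$-almost isometric to a flat torus $\bT^n$. After rescaling we may assume $D = 1$.

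\emph{Step 2 (harmonic $1$-forms and abelian cover).} Since $b_1(M) = n$, Hodge theory produces $n$ harmonic $1$-forms $\omega_1,\dots,\omega_n$ spanning $H^1(M;\R)$. Refined Bochner-type estimates under a Kato bound (see \cite{Rose,C16,CarronRose}) yield uniform $L^\infty$-bounds for the $\omega_i$ and show that $\int_M |\nabla \omega_i|^2 \di \nu_g \to 0$ as $\katoD \to 0$. After a linear change of basis, the Gram matrix $\big(\fint_M \langle \omega_i,\omega_j\rangle \di \nu_g\big)_{i,j}$ can further be assumed close to the identity. Lifting to the abelian cover $\hat M \to M$, whose deck group is $\Z^n$ modulo torsion, the $\omega_i$ become exact harmonic forms $d\tilde u_i$, and the map $\tilde \Phi := (\tilde u_1,\dots,\tilde u_n) \colon \hat M \to \R^n$ descends to a smooth map $\bar \Phi \colon M \to \R^n / \Lambda$, with $\Lambda \subset \R^n$ the period lattice of the family $(\omega_i)$.

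\emph{Step 3 ($\bar \Phi$ is a diffeomorphism).} This is where the strong Kato assumption \eqref{eq:SK} enters in a crucial way: by the Reifenberg-regularity results of \cite{CMT,CMT2}, condition \eqref{eq:SK} together with smallness of $\katoD$ provides uniform $C^{1,\alpha}$ harmonic coordinates on $M$ at every definite scale, and tangent cones that are uniformly close to Euclidean. Applied to the lifts $\tilde u_i$ on balls in $\hat M$ of controlled radius, and combined with the $L^2$-smallness of $\nabla \omega_i$ from Step~2 via a Campanato/Moser-type iteration, this yields a pointwise bound $|\langle \omega_i,\omega_j\rangle - \delta_{ij}| + |\nabla \omega_i| \le \eta(\updelta(n,f))$ on $M$, with $\eta \to 0$ as $\updelta \to 0$. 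Consequently $d\bar\Phi_x$ is close to a linear isometry $T_xM \to \R^n$ at every $x$, so $\bar\Phi$ is a local diffeomorphism as soon as $\updelta(n,f)$ is small enough. Since $\bar\Phi$ is $C^0$-close to the almost isometry $\Phi$ of Step~1, a standard degree and covering-space argument then forces $\bar\Phi$ to be a global diffeomorphism onto $\R^n/\Lambda$, proving the theorem.

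\emph{Main obstacle.} The delicate point is Step~3: promoting $L^2$-smallness of $\nabla \omega_i$ into uniform pointwise control of $d\bar\Phi$. In the Cheeger--Colding proof this comes from the $\eps$-regularity theorem for manifolds with Ricci bounded below, which is not at our disposal. The substitute is the regularity theory developed in \cite{CMT,CMT2}, and it is precisely at this stage that the integrability condition $\int_0^1 \sqrt{f(t)}/t \di t < \infty$ is used in an essential way; the smallness of $\katoD$ alone is not expected to be sufficient, which is why \eqref{eq:SK} appears in the hypotheses.
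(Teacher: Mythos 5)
Your proposal captures the right global strategy (Albanese map, Reifenberg regularity from \cite{CMT,CMT2}, local-diffeomorphism-implies-covering-implies-torus), and your Steps 1 and 2 are consistent with what the paper does. But Step 3 has a genuine gap at the exact point where the paper's work is concentrated.

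The Reifenberg regularity result from \cite[Theorem 5.19]{CMT2} (stated in the paper as Proposition~\ref{Reife}) does not conclude from the strong Kato bound and $L^2$-smallness of the Gram defect and Hessian alone. It requires in addition a heat-kernel density hypothesis, namely
\[
\uptheta(R^2,x) = (4\pi R^2)^{n/2} H(R^2,x,x) \le 1 + \upbeta(n,f),
\]
at the base point and relevant scale. This hypothesis is what rules out degeneracy of the harmonic chart; without it, $L^2$-smallness of $|dh\,{}^tdh - \Id_n|$ gives you a pointwise upper bound on $|dh|$ (via Proposition~\ref{prop:estiharm}) but cannot give the pointwise lower bound on the singular values of $dh$ that you need to conclude that $d\bar\Phi_x$ is close to an isometry everywhere. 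Your ``Campanato/Moser-type iteration'' cannot manufacture this lower bound on its own: you are implicitly assuming an $\eps$-regularity statement whose validity is precisely what the density hypothesis is certifying. In short, you cite the Reifenberg result but never check that its key hypothesis holds on $\widehat{M}$.

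Verifying $\widehat{\uptheta}((2D)^2,\widehat{o}) \le 1 + \upeta$ is the actual core of the paper's proof of Theorem~\ref{Thm:B}. It is done via a Cheeger--Yau-type lower bound on $\widehat{H}$, derived from the Li--Yau inequality of \cite{C16} and crucially using the integrability condition \eqref{eq:SK} to control $\exp\bigl(C(n)\int_0^{\ell D^2} \sqrt{\mbox{k}_\tau}\,\frac{\di\tau}{\tau}\bigr)$; this is then combined with the volume lower bound \eqref{volAhatM} (inherited from the almost-splitting estimates in Proposition~\ref{propAlbanese}), and stochastic completeness, to pin $\widehat{\uptheta}$ below $1 + \upeta$. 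None of this appears in your proposal. So while your overall outline points in the right direction, the step you describe as ``the delicate point'' is not merely delicate but missing, and as written the argument does not prove the theorem.

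A minor further point: you route through Theorem~\ref{Thm:A} in Step~1, but the paper's proof of Theorem~\ref{Thm:B} does not actually invoke Theorem~\ref{Thm:A}; it draws directly on Proposition~\ref{propAlbanese}, which is the common ingredient behind both theorems. This is not a gap, but your Step~1 is not load-bearing for the rest of your argument.
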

\begin{rem} Our proof actually shows a stronger result: for any $\upalpha\in (0,1)$ there exists $\updelta(n,f,\upalpha)>0$ such that if $(M^n,g)$ is a closed Riemannian manifold of diameter $D$ satisfying $b_1(M)= n$,  $\mbox{k}_{D^2}(M^n,g)\le \updelta(n,f,\upalpha)$ and $\mbox{k}_{tD^2}(M^n,g)\le f(t)$ for any $t \in (0,1]$, with $f$ satisfying \eqref{eq:SK}, then there exist a flat torus $(\bT^n,\dist_{\bT^n})$ and a diffeomorphism
$\cA\colon M\rightarrow \bT^n$ such that for any $x,y\in M$,
$$\upalpha\left(\frac{\dist_g(x,y)}{D}\right)^{\frac 1\upalpha}\le \frac{\dist_{\bT^n}\left(\cA(x),\cA(y)\right)}{D}\le  \upalpha^{-1}\left(\frac{\dist_g(x,y)}{D}\right)^\upalpha.$$
Here and throughout, $\dist_g$ is the Riemannian distance induced by $g$. 
\end{rem}

According to \cite{RoseStollmann}, an $L^p$ smallness condition on $\Ricm$ yields the strong Kato bound, so that Theorem \ref{Thm:B} has the following corollary. 
\begin{cor}For any $p>n/2$, there exists $\eps(n,p)>0$ such that if $(M^n,g)$ is a closed Riemannian manifold of diameter $D$ satisfying
\[
b_1(M)= n \quad \text{and} \quad D^2\left(\fint_M \Ricm^p\di\nu_g\right)^{\frac1p}\le \eps(n,p),
\]
then $M$ is diffeomorphic to a torus. 
\end{cor}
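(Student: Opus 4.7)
The plan is a direct reduction to Theorem \ref{Thm:B} via the Rose-Stollmann estimate. Concretely, by \cite{RoseStollmann}, for any $p > n/2$, there exist a constant $C(n,p) > 0$ and an exponent $\alpha = \alpha(n,p) > 0$ (one may take $\alpha = 1 - \tfrac{n}{2p}$) such that for any closed Riemannian manifold $(M^n,g)$ of diameter $D$,
\[
\mbox{k}_{tD^2}(M^n,g) \le C(n,p)\, t^{\alpha}\, D^2\left(\fint_M \Ricm^p\, \di\nu_g\right)^{1/p} \qquad \text{for all } t \in (0,1].
\]
Positivity of $\alpha$ — which is precisely where the hypothesis $p > n/2$ enters — is the essential feature that makes the bound compatible with \eqref{eq:SK}.

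Given this, define $f \colon [0,1] \to \R_+$ by $f(t) \coloneqq C(n,p)\, t^{\alpha}$. This $f$ is non-decreasing and satisfies the strong Kato condition \eqref{eq:SK}:
\[
\int_0^1 \frac{\sqrt{f(t)}}{t}\, \di t = \sqrt{C(n,p)} \int_0^1 t^{\alpha/2 - 1}\, \di t = \frac{2\sqrt{C(n,p)}}{\alpha} < +\infty.
\]
Let $\updelta(n,f) > 0$ be the threshold produced by Theorem \ref{Thm:B} applied to this $f$, and set
\[
\eps(n,p) \coloneqq \min\left\{\frac{\updelta(n,f)}{C(n,p)},\, 1\right\}.
\]
If $(M^n,g)$ satisfies $b_1(M) = n$ and $D^2\bigl(\fint_M \Ricm^p\, \di\nu_g\bigr)^{1/p} \le \eps(n,p)$, then evaluating the Rose-Stollmann estimate at $t = 1$ gives $\mbox{k}_{D^2}(M^n,g) \le \updelta(n,f)$, while for $t \in (0,1]$ the same estimate yields $\mbox{k}_{tD^2}(M^n,g) \le f(t)$. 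Theorem \ref{Thm:B} then applies and concludes that $M$ is diffeomorphic to a torus.

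The real work is entirely absorbed by Theorem \ref{Thm:B}; the corollary is a clean packaging of its hypotheses. The only item requiring attention is ensuring that the constant $C(n,p)$ in the Rose-Stollmann bound depends only on $n$ and $p$, without implicit dependence on an a priori Ricci lower bound, a volume non-collapsing constant, or similar auxiliary data. This is indeed the case in \cite{RoseStollmann}, whose heat-kernel-based argument relies only on dimension and the integrability exponent, so that $\eps(n,p)$ can honestly be chosen in terms of $n$ and $p$ alone.
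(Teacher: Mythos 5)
Your proof matches the paper's approach exactly: the paper simply invokes \cite{RoseStollmann} to convert the $L^p$ smallness into a strong Kato bound of the form $\mbox{k}_{tD^2}\le C(n,p)\,t^{1-n/(2p)}$, and then feeds the resulting $f$ into Theorem \ref{Thm:B}. Your write-up is a correct and more explicit version of the one-line reduction the authors intended.
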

Similarly,  using \cite[Theorem 4.3]{CarronRose}, we get the following corollary involving a suitable Morrey norm.
 \begin{cor}For any $\upalpha\in (0,2]$,  there exists $\eps(n,\upalpha)>0$ such that if $(M^n,g)$ is a closed Riemannian manifold of diameter $D$ satisfying
\[
b_1(M)= n \quad \text{and} \quad \sup_{\substack{x \in M\\r \in (0,D)}} D^{\alpha}r^{2-\alpha}\fint_{B_r(x)}\Ricm(y)\di\nu_g(y)\le \eps(n,\alpha),
\] 
then $M$ is diffeomorphic to a torus. 
\end{cor}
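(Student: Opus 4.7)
The strategy is to deduce this corollary from Theorem \ref{Thm:B} by showing that the Morrey-type smallness of $\Ricm$ implies a strong Kato bound with a power-type function $f$ in $t$.

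First I would rewrite the Morrey hypothesis as
\[
r \fint_{B_r(x)} \Ricm(y)\di\nu_g(y) \le \eps\, D^{-\upalpha} r^{\upalpha-1} \quad \text{for all } x \in M \text{ and } r \in (0, D),
\]
and integrate in $r$ over $(0, R)$ for any $R \in (0, D]$ to obtain the scale-adapted version of the integral condition appearing in the remark after Theorem \ref{Thm:A}, namely
\[
\sup_{x \in M} \int_0^R r \fint_{B_r(x)} \Ricm \di\nu_g \di r \le \frac{\eps}{\upalpha} \left(\frac{R}{D}\right)^{\upalpha}.
\]
Next I would invoke the scale-invariant form of \cite[Theorem 4.3]{CarronRose}: at scale $T = tD^2$ it provides a dimensional constant $C(n)$ with
\[
\mbox{k}_{tD^2}(M^n, g) \le C(n) \sup_{x \in M} \int_0^{\sqrt{t}\,D} r \fint_{B_r(x)} \Ricm \di\nu_g \di r \le \frac{C(n)\eps}{\upalpha}\, t^{\upalpha/2}
\]
for all $t \in (0, 1]$.

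Setting $f(t) \coloneqq t^{\upalpha/2}$, the function $f$ is non-decreasing and
\[
\int_0^1 \frac{\sqrt{f(t)}}{t} \di t = \int_0^1 t^{\upalpha/4 - 1} \di t = \frac{4}{\upalpha} < +\infty,
\]
so $f$ satisfies \eqref{eq:SK}. Let $\updelta(n, f) > 0$ denote the threshold supplied by Theorem \ref{Thm:B} for this $f$. Choosing $\eps(n, \upalpha) > 0$ so small that $C(n)\eps/\upalpha \le \min\{1, \updelta(n, f)\}$ ensures simultaneously $\mbox{k}_{tD^2}(M^n, g) \le f(t)$ for all $t \in (0, 1]$ and $\mbox{k}_{D^2}(M^n, g) \le \updelta(n, f)$, so Theorem \ref{Thm:B} applies and $M$ is diffeomorphic to a torus.

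The only mildly delicate point is the scale-invariant reformulation of \cite[Theorem 4.3]{CarronRose}; but both $\mbox{k}_T(M^n, g)$ and $\int_0^{\sqrt{T}} r \fint_{B_r(x)} \Ricm \di\nu_g \di r$ scale homogeneously in $(M, g, T)$ with the same weight, so extending the statement from $T = D^2$ to arbitrary $T > 0$ is mechanical, and the remainder of the argument reduces to elementary integration together with a single invocation of Theorem \ref{Thm:B}.
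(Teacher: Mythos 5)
Your proposal is correct and matches the route the paper sketches: the paper itself gives no detailed proof for this corollary, merely saying it follows ``similarly'' from Theorem~\ref{Thm:B} combined with \cite[Theorem 4.3]{CarronRose}. You have filled in exactly the expected details — rewriting the Morrey hypothesis, integrating in $r$ to obtain $\frac{\eps}{\upalpha}(R/D)^{\upalpha}$, invoking the scale-invariant form of the Carron--Rose estimate (a legitimate point since both $\mbox{k}_T$ and $\int_0^{\sqrt T} r \fint_{B_r}\Ricm\,\di r$ scale identically under metric dilation, so the statement at $T = D^2$ promotes to all $T$), taking $f(t) = t^{\upalpha/2}$, and checking $\int_0^1 t^{\upalpha/4-1}\,\di t = 4/\upalpha < \infty$ so that condition \eqref{eq:SK} holds, all before a single application of Theorem~\ref{Thm:B}. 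The only point worth flagging is that \cite[Theorem 4.3]{CarronRose} is being read as a quantitative bound $\mbox{k}_T \le C(n)\sup_x \int_0^{\sqrt T} r\fint_{B_r(x)}\Ricm\,\di r$ valid once the right-hand side is small; if the actual statement is of threshold type (small Morrey norm implies small Kato constant, without an explicit linear bound), you would simply need to check that for each $t$ the Morrey quantity at scale $\sqrt t D$ lies below the Carron--Rose threshold, which your smallness of $\eps(n,\upalpha)$ already guarantees, so the conclusion is unaffected.
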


Colding's original argument relied upon harmonic approximations of Busemann-like functions.  Here we follow an alternative approach, closer to the one proposed by Gallot in \cite{Gallot}.  We consider the Albanese map $\cA\colon M\rightarrow \bT^n$. We lift $\cA$ to a harmonic map $\widehat \cA:=(\widehat \cA_1,\dots,\widehat \cA_n)$ defined on a suitable abelian cover $\widehat M{\longrightarrow} M$ which is equivariant under an action of $\Z^n$. Estimates from \cite{C16} imply that if $\mbox{k}_{D^2}(M^n,g)\le \updelta \le 1/(16n)$, then $\widehat \cA$ is surjective, $(1+C(n)\sqrt[3]{\updelta})$-Lipschitz and for any $r\in [D, \updelta^{-1/6}D]$,
\begin{equation}\label{eq:star}\tag{$\star_r$}
 \left(r^2\fint_{B_r} |\nabla d\widehat\cA|^2\di\nu_{\widehat g}\right)^{\frac 12} + \fint_{B_r} \left| \langle d\widehat \cA_i,d\widehat \cA_j\rangle -\updelta_{i,j}\right| \le C(n)\sqrt[3]{\updelta}.
\end{equation}

One original point in our proof of Theorem \ref{Thm:A} is the use of an almost rigidity result (Theorem \ref{thm:presEucl}) which implies that under such an estimate, the restriction of $\widehat \cA$ to a ball of radius $64n^2D$ realizes an $\eps D$-almost isometry with a Euclidean ball of the same radius.  We prove this almost rigidity result by means of the analysis developed in \cite{CMT}. Then we can follow the lines of Gallot's argument to get Theorem \ref{Thm:A}.

We prove Theorem \ref{Thm:B} by showing that $\cA$ is a diffeomorphism, answering a question raised by Gallot \cite[Section 6]{Gallot}. This  differs from Colding's proof which used the intrinsic Reifenberg theorem to conclude.  It is enough to show that the restriction of $\widehat{\cA}$ to the ball $B_D(\widehat{o})$ is a diffeomorphism onto its image.  In the context of almost non-negative Ricci curvature,  according to the recent Reifenberg theorem established in \cite[Theorem 7.10]{CJN}, this is the case if ($\star_{2D}$) holds and if the volume ratio $$\frac{\nu_{\widehat{g}}(B_{2D}(\widehat{o}))}{\omega_n (2D)^n}$$ is almost one.  In our context, we have at our disposal an analogous Reifenberg type result: see Proposition \ref{Reife}.  To apply this result, we must control a heat kernel ratio which plays the role, in our setting, of the volume ratio for Ricci curvature lower bounds. One main difference is that, unlike the volume, the heat kernel is a non-local quantity.  Our key tools to get the desired control are a heat kernel comparison theorem à la Cheeger--Yau \cite{CheegerYau}, and an almost Euclidean volume bound (Theorem \ref{thm=presquesurjective}, after \cite[Theorem 1.2]{cheeger_structure_2000}).

\hfill

\noindent \textbf{Acknowledgments:}
The authors thank the anonymous referee for constructive comments including Remark \ref{rk:ref_comment}.  They are partially supported by the ANR grant ANR-17-CE40-0034: CCEM. The first author is also partially supported by the ANR grant ANR-18-CE40-0012: RAGE.  The third author is supported by Laboratoire de Mathématiques Jean Leray via the project Centre Henri Lebesgue ANR-11-LABX-0020-01, and by Fédération de Recherche Mathématiques de Pays de Loire via the project Ambition Lebesgue Loire.

\section{The Dynkin condition and consequences}

In this section, we point out relevant properties of the so-called Dynkin condition. We say that a complete Riemannian manifold $(M^n,g)$ satisfies such a condition if there exists $T>0$ such that
\begin{align}\label{Dynkin}\tag{Dy}
 \mbox{k}_T(M^n,g)\le\frac{1}{16n} \, \cdot
 \end{align}

\subsection{Closed manifolds}

Let us first mention properties of closed Riemannian manifolds satisfying a Dynkin condition.

\subsubsection{Volume doubling} See \cite[Proposition 3.8]{C16} and \cite[Proposition 3.3]{CMT} for the next result.
\begin{prop}\label{VD} Let $(M^n,g)$ be a closed Riemannian manifold satisfying \eqref{Dynkin}.  Then there exists $C(n)>0$ such that for any $x\in M$ and $0<s\le r\le \sqrt{T}$,
$$  \frac{\nu_g\left(B_r(x)\right)}{\nu_g\left(B_s(x)\right)}\le C(n) \left(\frac{r}{s}\right)^{e^2 n} .$$
\end{prop}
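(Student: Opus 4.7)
The plan is to derive two-sided Gaussian heat kernel bounds from \eqref{Dynkin} and then extract volume doubling from them. The starting point is a Bakry--Émery type gradient estimate for the heat semigroup $P_t = e^{t\Delta}$. Bochner gives
\[
(\partial_t - \Delta)|\nabla P_t f|^2 \le 2\Ricm\,|\nabla P_t f|^2,
\]
so Duhamel and Feynman--Kac lead to
\[
|\nabla P_t f|^2(x) \le \mathbb{E}_x\!\left[\exp\!\left(2\!\int_0^t \Ricm(X_s)\,\di s\right)|\nabla f|^2(X_t)\right].
\]
Khasminskii's lemma applied to the Kato bound \eqref{Dynkin} yields the uniform control $\mathbb{E}_x[\exp(2\!\int_0^T \Ricm(X_s)\,\di s)] \le C(n)$; thus $|\nabla P_t f|^2 \le C(n)\, P_t|\nabla f|^2$ for every $t \le T$.

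From this integrated gradient bound I would follow the Li--Yau--Grigor'yan scheme developed in \cite{C16}. Combined with the mass conservation $P_t 1 = 1$ and a standard Davies-type argument, it produces the on-diagonal equivalence $H(t,x,x) \asymp \nu_g(B_{\sqrt{t}}(x))^{-1}$ for $t \le T$ together with matching Gaussian off-diagonal decay
\[
\frac{c(n)\,e^{-C(n)\dist_g^2(x,y)/t}}{\nu_g(B_{\sqrt{t}}(x))} \le H(t,x,y) \le \frac{C(n)\,e^{-\dist_g^2(x,y)/(C(n)t)}}{\nu_g(B_{\sqrt{t}}(x))}.
\]

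Volume doubling then follows by the classical Grigor'yan--Saloff-Coste argument: comparing the diagonal equivalences at scales $r$ and $s$ with $0 < s \le r \le \sqrt{T}$ and chaining balls via the semigroup property produces $\nu_g(B_r(x))/\nu_g(B_s(x)) \le C(n)(r/s)^{e^2 n}$. The main obstacle is the quantitative bookkeeping needed to reach the precise exponent $e^2 n$: this requires carefully tracking the interplay between the Bochner dimensional constant and the iterated Kato potential in the Duhamel expansion, and cannot be obtained from soft functional-analytic reasoning alone.
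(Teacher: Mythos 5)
The paper does not re-prove this proposition; it refers to \cite[Proposition 3.8]{C16} and \cite[Proposition 3.3]{CMT}, where the argument runs through a \emph{dimensional} Li--Yau inequality for positive solutions of the heat equation, derived from \eqref{Dynkin} by a Bochner computation that tracks the Kato potential. Integrating that Li--Yau bound along geodesics gives a parabolic Harnack inequality whose effective dimension is of the form $n\,J$, where $J$ is an explicit function of $\mbox{k}_T$; under $\mbox{k}_T\le 1/(16n)$ one gets $nJ\le e^2 n$, and the usual chaining-plus-stochastic-completeness argument then yields doubling with exactly that exponent.

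Your route is different and, as written, has a genuine gap. The Bakry--\'Emery/Khasminskii step gives an integrated gradient estimate of the type $|\nabla P_t f|^2\le C(n)\,P_t|\nabla f|^2$, which is a dimension-free ($\BE(K,\infty)$-type) bound. Such a bound does not, by itself, imply two-sided Gaussian heat kernel estimates nor volume doubling: the dimension $n$ plays no role in it, and weighted counterexamples (e.g.\ Gaussian measure on $\setR^n$) show that $\BE$-type gradient bounds are compatible with failure of doubling at the relevant scales. The Grigor'yan--Saloff-Coste equivalences do let one extract doubling from two-sided Gaussian bounds, but the step \emph{before} that in your outline --- getting the two-sided Gaussian bounds with the volume normalization $H(t,x,x)\asymp\nu_g(B_{\sqrt t}(x))^{-1}$ from the gradient estimate and $P_t 1=1$ alone --- is exactly the missing input, and is not supplied by a ``Davies-type'' argument without first having a Faber--Krahn inequality or a dimensional Li--Yau bound. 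In short: you need the dimensional Li--Yau inequality (with its $n/(2t)$-type term controlled via the Kato/Dynkin hypothesis), not the dimension-free gradient bound, to reach the doubling estimate; this is also where the exponent $e^2 n$ actually comes from, rather than from bookkeeping in a Duhamel expansion.
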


\subsubsection{Heat kernel bounds} See \cite[Proposition 2.6]{CMT2} for the following.
\begin{prop}\label{Prop:heatK} Let $(M^n,g)$ be a closed Riemannian manifold satisfying \eqref{Dynkin}.   Then there exists $C(n)>0$ such that for any $x,y\in M$ and $t\in (0,T),$
\begin{enumerate}[i)]
\item 
$ \frac{C(n)^{-1}}{\nu_g(B_{\sqrt{t}}(x))}e^{-C(n)\frac{\dist_g^2(x,y)}{t}}\le H(t,x,y)\le \frac{C(n)}{\nu_g(B_{\sqrt{t}}(x))}e^{-\frac{\dist_g^2(x,y)}{5t}},$
\item $\left|d_x H(t,x,y)\right|\le  \frac{C(n)}{\sqrt{t}\nu_g(B_{\sqrt{t}}(x))}e^{-\frac{\dist_g^2(x,y)}{5t}}.$
\end{enumerate}
\end{prop}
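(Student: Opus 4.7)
The plan is to establish the heat kernel bounds through the classical route consisting of (a) a semigroup gradient estimate derived from a Bochner/Feynman--Kac argument, (b) a Nash-type on-diagonal upper bound from volume doubling plus this gradient estimate, (c) Davies' perturbation trick for Gaussian decay, (d) a chaining argument for the matching lower bound, and (e) a transfer of the semigroup gradient bound to the heat kernel itself in order to obtain $ii)$.

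For $i)$, I would start from the Bochner inequality $(\partial_t - \Delta)|\nabla u|^2 \le 2\Ricm |\nabla u|^2$ for $u = P_t f$; a Feynman--Kac representation (equivalently, a Duhamel/Trotter--Kato expansion) then gives
\[
|\nabla P_t f|^2(x) \le \mathbb{E}_x\!\left[ |\nabla f(B_t)|^2 \exp\!\Big( 2\int_0^t \Ricm(B_s)\,\di s\Big)\right],
\]
with $(B_t)$ Brownian motion on $M$. A Khas'minskii-type lemma, in which the Dynkin assumption $\katoT\le 1/(16n)$ makes the Neumann series $\sum_k (c\,\katoT)^k$ converge, produces $\sup_x \mathbb{E}_x[\exp(2\int_0^t \Ricm(B_s)\,\di s)] \le C(n)$ for $t\le T$, and Cauchy--Schwarz together with the Markov property then yield the dimension-free estimate $|\nabla P_t f|^2 \le C(n)\, P_t(|\nabla f|^2)$. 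Combined with the volume doubling of Proposition \ref{VD}, this semigroup gradient bound produces (via the Saloff-Coste--Grigor'yan framework) a Nash-type functional inequality, from which Nash's moment method delivers $H(t,x,x)\le C(n)/\nu_g(B_{\sqrt t}(x))$. The full Gaussian upper bound in $i)$ then follows from Davies' perturbation trick: apply the semigroup to $e^{-\xi}f$ with $\xi$ $1$-Lipschitz and optimize over $\xi$.

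For the matching lower bound, I would use the standard chaining argument: the $L^1$-normalization $\int H(t,x,y)\,\di\nu_g(y)=1$ combined with the upper bound forces the on-diagonal lower bound $H(t,x,x) \gtrsim 1/\nu_g(B_{\sqrt t}(x))$, and volume doubling then propagates it to general pairs $(x,y)$ via the semigroup identity chained over $k\sim \dist_g(x,y)^2/t$ steps. For $ii)$, applying the semigroup gradient bound to $y\mapsto H(t/2,\cdot,y)$ together with $H(t,x,y) = \int H(t/2,x,z)H(t/2,z,y)\,\di\nu_g(z)$ gives
\[
|d_x H(t,x,y)|^2 \le C(n)\, P_{t/2}\!\big(|d_x H(t/2,\cdot,y)|^2\big)(x),
\]
and combining this with a parabolic Caccioppoli estimate for the caloric function $(s,x)\mapsto H(s,x,y)$ and the upper bound from $i)$ yields the announced gradient estimate on the kernel.

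The main obstacle is the quantitative Feynman--Kac/Khas'minskii step with sharp dependence on the Kato constant: the threshold $\katoT\le 1/(16n)$ is precisely what makes the exponential moment of $\int\Ricm(B_s)\,\di s$ bounded by a purely $n$-dependent constant, and underlies the same threshold appearing in Proposition \ref{VD}. Once the semigroup gradient estimate and volume doubling are in hand, the remaining ingredients fall within the established framework of Grigor'yan, Saloff-Coste and Davies.
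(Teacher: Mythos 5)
The paper does not prove this proposition; it is quoted verbatim from \cite[Proposition 2.6]{CMT2}, which in turn rests on the Li--Yau gradient inequality of \cite[Proposition 3.3]{C16} (this is the same inequality invoked later in the proof of Theorem~\ref{Thm:B}, Step~1). That route is: Bochner plus the Dynkin/Kato bound give a quantitative Li--Yau inequality for positive caloric functions; integrating it along paths gives a parabolic Harnack inequality, which combined with volume doubling yields the two-sided Gaussian bounds in $i)$ directly; and the gradient estimate $ii)$ drops out of the Li--Yau inequality itself since $|\nabla_x \log H|\lesssim \dist_g(x,y)/t + 1/\sqrt{t}$. Your route is genuinely different after the common Bochner starting point: instead of Li--Yau/Harnack you propose Feynman--Kac plus Khas'minskii to get a semigroup gradient bound, then Nash's method for the on-diagonal bound, Davies' perturbation for Gaussian decay, chaining for the lower bound, and Caccioppoli for $ii)$. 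This is a sound alternative and indeed close to the one used elsewhere in the Kato literature for the \emph{upper} bound, but it is longer, and for $ii)$ the Caccioppoli route only gives $L^2$ gradient control directly, so one must still insert an integrated Gaussian-weighted Caccioppoli argument (Grigor'yan, Coulhon--Duong) to recover the pointwise bound with the stated Gaussian factor; the Li--Yau route hands you $ii)$ for free.

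One step as you wrote it does not quite close. You claim Cauchy--Schwarz turns
$|\nabla P_t f|^2(x)\le \mathbb{E}_x\bigl[|\nabla f(B_t)|^2 e^{2\int_0^t\Ricm(B_s)\di s}\bigr]$
into $|\nabla P_t f|^2\le C(n)\,P_t(|\nabla f|^2)$, but Cauchy--Schwarz on the right-hand side produces $(P_t|\nabla f|^4)^{1/2}$, not $P_t(|\nabla f|^2)$. The standard fix, and what is actually used in \cite{C16}, is to apply Bochner plus Kato's inequality to $|\nabla u|$ rather than $|\nabla u|^2$: one gets $(\partial_t-\Delta)|\nabla u|\le \Ricm\,|\nabla u|$ in the distributional sense, hence
$|\nabla P_t f|(x)\le \mathbb{E}_x\bigl[|\nabla f(B_t)|\,e^{\int_0^t\Ricm(B_s)\di s}\bigr]$,
and now Cauchy--Schwarz together with the Khas'minskii bound on $\mathbb{E}_x[e^{2\int_0^t\Ricm}]$ does give $|\nabla P_t f|^2\le C(n)\,P_t(|\nabla f|^2)$. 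With that correction your sketch is a legitimate, if less economical, alternative to the cited Li--Yau argument.
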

When $T\ge \diam^2(M^n,g)$, we can get an estimate depending only on the volume of $M$.
\begin{prop}\label{harpheat} Let $(M^n,g)$ be a closed Riemannian manifold of diameter $D$ satisfying \eqref{Dynkin}.  If  $T\ge D^2$
then for any $x,y\in M$ and $t\in (D^2,T)$,
$$H(t,x,y)\le\left(1+C(n)\frac{D}{\sqrt{t}}\right)\frac{1}{\nu_g(M)} \, \cdot$$
\end{prop}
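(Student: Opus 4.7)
The plan is to read the desired bound as an oscillation estimate: it says that at time $t \in (D^2,T)$ the heat kernel $H(t,\cdot,y)$ is nearly constant, with absolute oscillation of order $D/(\sqrt{t}\,\nu_g(M))$, and the uniform value must be $1/\nu_g(M)$ for mass–preservation reasons. The natural tool is the gradient estimate from Proposition~\ref{Prop:heatK} ii) combined with the trivial mean-value identity $\int_M H(t,x,y)\di\nu_g(x)=1$.

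\textbf{Step 1: spatial gradient bound for $t>D^2$.} Since $\sqrt{t}>D\ge \dist_g(x,y)$, we have $B_{\sqrt{t}}(x)=M$, so $\nu_g(B_{\sqrt{t}}(x))=\nu_g(M)$ and the exponential factor $e^{-\dist_g^2(x,y)/(5t)}$ is bounded by $1$. Proposition~\ref{Prop:heatK} ii) (applicable because $t<T$) then yields
\[
|d_x H(t,x,y)|\le\frac{C(n)}{\sqrt{t}\,\nu_g(M)}
\]
for every $x,y\in M$ and every $t\in(D^2,T)$.

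\textbf{Step 2: oscillation estimate.} Integrating the gradient bound along a minimizing geodesic between arbitrary $x,x'\in M$ (whose length is at most $D$) gives
\[
|H(t,x,y)-H(t,x',y)|\le \frac{C(n)\,D}{\sqrt{t}\,\nu_g(M)}.
\]

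\textbf{Step 3: locate a point with small value.} By symmetry of the heat kernel and conservativeness on a closed manifold, $\int_M H(t,x,y)\,d\nu_g(x)=1$, so the average of $H(t,\cdot,y)$ equals $1/\nu_g(M)$. In particular there exists $x^\ast\in M$ with $H(t,x^\ast,y)\le 1/\nu_g(M)$.

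\textbf{Step 4: conclusion.} Combining Steps~2 and~3, for every $x\in M$,
\[
H(t,x,y)\le H(t,x^\ast,y)+|H(t,x,y)-H(t,x^\ast,y)|\le\frac{1}{\nu_g(M)}+\frac{C(n)\,D}{\sqrt{t}\,\nu_g(M)}=\left(1+C(n)\frac{D}{\sqrt{t}}\right)\frac{1}{\nu_g(M)},
\]
which is the claimed bound.

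No serious obstacle is expected: the entire argument is a soft consequence of the already established Gaussian-type gradient estimate and total-mass normalization. The only point requiring a small verification is that Proposition~\ref{Prop:heatK} ii) applies on the whole range $t\in(D^2,T)$, which is automatic since the Dynkin condition is imposed at scale $T\ge D^2$ and the proposition's hypothesis is exactly $t\in(0,T)$.
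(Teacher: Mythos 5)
Your proof is correct and follows essentially the same approach as the paper: the gradient bound from Proposition~\ref{Prop:heatK}~ii) (with $B_{\sqrt{t}}(x)=M$ since $\sqrt{t}>D$), integrated along a geodesic to get an oscillation bound, combined with the mass normalization $\int_M H(t,\cdot,y)\,\di\nu_g=1$ to locate a point of small value. The only cosmetic difference is that the paper picks $z_0$ with $H(t,z_0,y)=1/\nu_g(M)$ exactly, whereas you pick $x^\ast$ with $H(t,x^\ast,y)\le 1/\nu_g(M)$; both suffice.
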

\proof Let $t\in (D^2,T)$ and $x,y\in M$. By stochastic completeness, we have that $\int_M H(t,z,y)\di\nu_g(z)=1$, hence there is some $z_0\in M$ such that $H(t,z_0,y)=\frac{1}{\nu_g(M)}$. By the previous proposition, we have that $\left|d_z H(t,z,y)\right|\le \frac{C(n)}{\sqrt{t}\nu_g(M)}.$  Then
$$\left| H(t,x,y)-H(t,z_0,y)\right|\le \dist_g(x,z_0)\,  \frac{C(n)}{\sqrt{t}\nu_g(M)}\le  \frac{C(n) D}{\sqrt{t}\nu_g(M)} \, \cdot$$
\endproof
\subsection{Non-compact manifolds}
For complete non-compact manifolds, the results of \cite[Subsection 3.1]{CMT} yield the following.
\begin{prop}\label{convergence} Let $(M^n,g,o)$ be a pointed complete  Riemannian manifold.  Assume that there exists a sequence $\left\{ (M_\alpha^n,g_\alpha,o_\alpha)\right\}_\alpha$ of pointed closed  Riemannian manifolds satisfying \eqref{Dynkin} for a same $T>0$. Assume that for any $R>0$ there exists $\alpha_R$ such that for any $\alpha \ge \alpha_R$ there exists a diffeomorphism onto its image
$$\Phi_\alpha\colon B_R(o)\rightarrow M_\alpha$$ such that the following convergence holds:
$$\lim_{\alpha\to+\infty} \left\|\Phi_\alpha^*g_\alpha-g\right\|_{\cC^0}=0.$$
Then $(M^n,g)$ satisfies \eqref{Dynkin}, the volume doubling estimate from Proposition \ref{VD} and the heat kernel estimates from Proposition \ref{Prop:heatK}.
\end{prop}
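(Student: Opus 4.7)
The plan is to transfer the three conclusions from the approximating sequence $(M^n_\alpha, g_\alpha, o_\alpha)$ to the limit $(M^n, g, o)$ by pulling back via $\Phi_\alpha$ and exploiting the fact that in both Proposition \ref{VD} and Proposition \ref{Prop:heatK} the constants depend only on $n$, so that the estimates along the sequence are automatically uniform; the whole point is then to check that each estimate is stable under $\cC^0$-convergence of the metric on compact sets.

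Volume doubling is the most direct. Fixing $R>0$ and taking $\alpha\ge \alpha_{2R}$, for $x\in B_R(o)$ and $0<s\le r\le \sqrt{T}$ with $B_r(x)\subset B_{2R}(o)$, the $\cC^0$-convergence $\Phi_\alpha^*g_\alpha\to g$ forces distances and volumes on $B_{2R}(o)$ to converge; in particular $\nu_{g_\alpha}(\Phi_\alpha(B_\rho(x)))\to \nu_g(B_\rho(x))$ for $\rho\in\{s,r\}$. Applying Proposition \ref{VD} on each $(M_\alpha,g_\alpha)$ and letting first $\alpha\to+\infty$ then $R\to+\infty$ gives the desired doubling estimate on $(M,g)$.

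For the heat kernel, the uniform Gaussian upper and lower bounds together with the gradient bound of Proposition \ref{Prop:heatK} yield, after pull-back by $\Phi_\alpha$, local equicontinuity of $\Phi_\alpha^*H_\alpha$ on every compact subset of $(0,T)\times M\times M$. Combined with the convergence of the Dirichlet forms $f\mapsto \int |\nabla f|^2_{\Phi_\alpha^*g_\alpha}\,d\nu_{\Phi_\alpha^*g_\alpha}$ to $f\mapsto \int |\nabla f|^2_g\,d\nu_g$, which is immediate from $\cC^0$-convergence of the metrics and of the induced volume forms, a standard Mosco-convergence-of-forms argument identifies the limit as the heat kernel $H$ of $(M,g)$. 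The pointwise inequalities of Proposition \ref{Prop:heatK} then pass to the limit thanks to the already established volume doubling.

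The main obstacle is to propagate \eqref{Dynkin} itself, since $\Ricm$ is a second-order quantity in $g$ whose pointwise behaviour is not controlled by mere $\cC^0$-convergence. The plan is to invoke the equivalent reformulation of the Dynkin bound used in \cite[Subsection 3.1]{CMT}, which replaces the definition by a functional inequality (of Hsu/Feynman--Kac type) involving only the heat semigroup and first-order data of the metric through $|\nabla H|$ and $H$ itself. On each $(M_\alpha,g_\alpha)$ this reformulation holds quantitatively with the uniform constant $1/(16n)$; since both the heat kernels and the metrics converge as above, the functional inequality passes to the limit on $(M,g)$, and the converse direction of the same equivalence then yields $\mathrm{k}_T(M^n,g)\le 1/(16n)$.
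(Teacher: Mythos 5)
The paper does not actually give a proof of this proposition; it simply attributes it to ``the results of [CMT, Subsection 3.1]'', so there is no argument in the text for you to match. Against that backdrop, your outlines for volume doubling and for the heat kernel estimates are reasonable: $\cC^0$-convergence of $\Phi_\alpha^*g_\alpha$ does control distances and Riemannian volume, so the doubling inequality with its dimensional constant passes to the limit; and the combination of the uniform Gaussian two-sided bounds, the gradient bound, and some Mosco-type convergence of forms is a standard route to pointwise convergence of heat kernels, after which the estimates of Proposition \ref{Prop:heatK} are inherited. You also correctly single out the Dynkin condition as the delicate point.

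The gap is in how you propose to handle it. You write that there is ``an equivalent reformulation of the Dynkin bound used in [CMT, Subsection 3.1] \ldots\ involving only the heat semigroup and first-order data of the metric'' and that one can transport the functional inequality and then apply the ``converse direction.'' No such equivalence is stated, and I do not believe one exists: the Kato constant $\katoT$ is defined directly through $\Ricm$, and the implications from a Kato smallness to heat-kernel/gradient estimates (via Feynman--Kac or Li--Yau) go one way only. Two metrics that are $\cC^0$-close on a ball can have wildly different $\Ricm$, so nothing in the hypotheses forces the limit manifold's pointwise lowest Ricci eigenvalue to interact well with its heat kernel. To salvage the Dynkin conclusion you either need a stronger mode of convergence ($\cC^2$, or in fact the exact local isometries occurring in Proposition \ref{Prop:Katoforcovering}, which is the only place the statement is invoked and where $\Ricm$ is literally transported), or you need to state and justify the precise reformulation you want to use. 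As written, the third step of your proposal is asserting, not proving, and that assertion is exactly the hard part of the statement.
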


We recall that a group $\Gamma$ with neutral element $1$ is residually finite if and only if it admits a sequence of normal subgroups $\{\Gamma_j\}$ with finite index such that 
$$\bigcap_j \Gamma_j=\{1\}.$$
Then Proposition \ref{convergence} has the following useful application.

\begin{prop}\label{Prop:Katoforcovering} Let $\pi:\widehat{M}\rightarrow M$ be a normal covering of a closed Riemannian manifold $(M^n,g)$ with residually finite deck transformation group.
If $({M},g)$ satisfies \eqref{Dynkin}, then $(\widehat{M},\pi^*g)$ satisfies \eqref{Dynkin}, the volume doubling estimate from Proposition \ref{VD} and the heat kernel estimates from Proposition \ref{Prop:heatK}.
\end{prop}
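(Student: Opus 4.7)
The plan is to exhibit $(\widehat M, \pi^* g)$ as a smooth pointed limit of a sequence of closed Riemannian manifolds, each satisfying \eqref{Dynkin} with the \emph{same} $T$ as $(M,g)$, and then invoke Proposition \ref{convergence}. Residual finiteness produces the approximating sequence: choose a sequence $\{\Gamma_j\}$ of finite index normal subgroups of the deck group $\Gamma$ with $\bigcap_j \Gamma_j = \{1\}$, and after replacing $\Gamma_j$ by $\bigcap_{k \le j}\Gamma_k$ we may assume it is decreasing. Set $M_j := \widehat M/\Gamma_j$, equipped with the quotient metric $g_j$; each $M_j$ is closed, the projection $p_j \colon M_j \to M$ is a finite normal Riemannian covering, and $\pi$ factors as $\pi = p_j \circ \pi_j$ with $\pi_j \colon \widehat M \to M_j$.

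The key analytic step is to show $\mbox{k}_T(M_j^n, g_j) \le \mbox{k}_T(M^n,g)$. Since $p_j$ is a local isometry, $\Ricm_{g_j} = \Ricm_g \circ p_j$, and for finite normal coverings the heat kernels satisfy the classical identity
\[
H(t, p_j(z), y) = \sum_{\gamma \in \Gamma/\Gamma_j} H_{g_j}(t, z, \gamma \tilde y)
\]
for any lift $\tilde y$ of $y$; in particular $H_{g_j}(t,z,\tilde y) \le H(t,p_j(z), p_j(\tilde y))$. Decomposing $M_j$ as $|\Gamma/\Gamma_j|$ fundamental domains for $p_j$ and using the identity above, one obtains
\[
\iint_{[0,T]\times M_j} H_{g_j}(t,z,w)\Ricm_{g_j}(w)\,\di\nu_{g_j}(w)\,\di t = \iint_{[0,T]\times M} H(t,p_j(z),y)\Ricm(y)\,\di\nu_g(y)\,\di t,
\]
hence $\mbox{k}_T(M_j^n, g_j)\le \mbox{k}_T(M^n,g) \le 1/(16n)$.

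For the smooth convergence, fix $\hat o \in \widehat M$ and $R>0$. Since $\Gamma$ acts properly discontinuously, the set $S_{2R} := \{\gamma \in \Gamma : \dist_{\pi^*g}(\hat o, \gamma \hat o) \le 2R\}$ is finite. As $\{\Gamma_j\}$ is decreasing with trivial intersection, there exists $j_R$ such that $\Gamma_j \cap S_{2R} = \{1\}$ for $j \ge j_R$. For such $j$, the restriction $\Phi_j := \pi_j|_{B_R(\hat o)}$ is injective (two points of $B_R(\hat o)$ identified by $\pi_j$ would differ by a non-trivial element of $\Gamma_j \cap S_{2R}$) and, being a local isometry, it is a diffeomorphism onto its image satisfying $\Phi_j^* g_j = \pi^* g$ identically on $B_R(\hat o)$. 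Thus the hypothesis of Proposition \ref{convergence} holds trivially, and the conclusion follows.

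The only genuinely delicate point is the heat-kernel/fundamental-domain computation giving $\mbox{k}_T(M_j) \le \mbox{k}_T(M)$; everything else is bookkeeping with residual finiteness and the proper discontinuity of deck transformations.
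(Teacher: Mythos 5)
Your proposal is correct, and it follows the same overall scheme as the paper (pass to the finite quotients $M_j = \widehat M/\Gamma_j$, show each satisfies \eqref{Dynkin} with the same $T$, and invoke Proposition \ref{convergence} via the local isometry on large balls). The one genuinely different ingredient is how you establish the Kato constant bound $\mbox{k}_T(M_j) \le \mbox{k}_T(M)$. You use the classical finite-cover heat kernel identity $H_g(t,p_j(z),y) = \sum_{\gamma\in\Gamma/\Gamma_j} H_{g_j}(t,z,\gamma\tilde y)$ together with a fundamental-domain decomposition of $M_j$; this yields equality and is perfectly valid for finite covers. The paper instead proves the semi-group intertwining $e^{-t\Delta_{\pi^*g}}(V\circ\pi) = (e^{-t\Delta_g}V)\circ\pi$ for bounded $V$, via stochastic completeness and $L^\infty$-uniqueness of the heat flow, which gives $\mbox{k}_T(\widehat M,\pi^*g) = \mbox{k}_T(M,g)$ directly for \emph{any} normal covering, infinite or not, and then specializes to the $M_j$. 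The paper's route is slightly more uniform (one argument handles both the infinite cover and the finite quotients), whereas yours is more elementary and relies only on the standard finite-cover summation formula; both deliver the needed estimate. Your remark about replacing $\Gamma_j$ by $\bigcap_{k\le j}\Gamma_k$ to make the sequence decreasing, and the properly-discontinuous-action argument showing $\pi_j$ is injective on $B_R(\hat o)$ for $j$ large, match the intent of the paper's brief statement that $p_j\colon B_R(\hat o)\to B_R(p_j(\hat o))$ is eventually an isometry.
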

\begin{proof}
We start by noticing that if $V\colon M\rightarrow \R$ is a bounded function then
$$e^{-t\Delta_{\pi^*g}}(V\circ \pi)=\left(e^{-t\Delta_{g}}V\right)\circ \pi.$$
Indeed, since $(M,g)$ is closed, $(\widehat{M}, \pi^*g)$ is stochastically complete and then we have uniqueness in $L^\infty$ of the solution of the heat equation with fixed initial condition
$$\left\{\begin{array}{l}
\left(\frac{\partial }{\partial t}+\Delta_{\pi^*g}\right)u=0,\\
u(0,\cdot)=V\circ \pi(\cdot).\end{array}\right. $$
But $e^{-t\Delta_{\pi^*g}}(V\circ \pi)$  and $\left(e^{-t\Delta_{g}}V\right)\circ \pi$ are both solutions of this Cauchy problem hence we get the desired equality.
Notice that $\Ricm(\pi^*g)=\Ricm(g)\circ\pi$, hence for any $x\in \widehat{M}$ and $t>0$
$$\int_{\widehat{M}} H_{\pi^*g}(t,x,y) \Ricm(\pi^*g)(y) \di\nu_{\pi^*g}(y)=\int_M H_g(t,\pi(x),z) \Ricm(g)(z) \di\nu_{g}(z),$$
where we have noted $H_{\pi^*g}$ (resp. $H_g$) the heat kernel of $(\widehat{M}, \pi^*g)$ (resp. of $({M}, g)$). Hence we get that for any $T>0$:
\begin{equation}\label{eq:katochapeau}
\mbox{k}_T(M,g)=\mbox{k}_T(\widehat{M}, \pi^*g).
\end{equation}
As $\Gamma$ is residually finite,  there exists a sequence of normal subgroup $\Gamma_j\triangleleft\Gamma$ of finite index such that 
$$\bigcap_j \Gamma_j=\{1\}.$$
 For any $j\in \N^*$, we set 
  $\widehat M_j:=\widehat M/\Gamma_j.$
We get two covering maps
$$\widehat M\stackrel{p_j}{\longrightarrow}\widehat M_j\stackrel{\pi_j}{\longrightarrow}M.$$
Note that $\widehat M_j$ is a closed manifold and the above argument implies that 
 for any $T>0$:
$$\mbox{k}_T(\widehat{M}, \pi^*g)=\mbox{k}_T(\widehat{M}_j, \pi_j^*g).$$
Hence each $(\widehat{M}_j, \pi_j^*g)$ satisfies \eqref{Dynkin}.  Moreover,  if we consider $\widehat{o}\in \widehat{M},$ then for any $R>0$ there is some $j_R$ such that for any $j\ge j_R$, the restriction of the covering map 
$$p_j\colon B_R(\widehat{o})\rightarrow B_R(p_j(\widehat{o}))$$ is an isometry. Then the conclusion follows from Proposition \ref{convergence}.
\end{proof}

\section{Almost rigidity results}

In this section, we provide almost rigidity results which are consequences of our previous work \cite{CMT,CMT2}. For any $\rho>0$, we let $\bB^n_\rho$ be the Euclidean ball in $\setR^n$ centered at $0$ with radius $\rho$. If $B$ is a ball in an $n$-dimensional Riemannian manifold and $h = (h_1,\ldots,h_n) :B \to \setR^n$ is  a smooth map, we denote by
\[
dh \, ^tdh  = \left[ \langle dh_i,dh_j \rangle \right]_{1\le i,j \le n}
\]
its Gram matrix map.  We write $\mathrm{Id}_n$ for the identity matrix of size $n$ and $\omega_n$ for the Lebesgue measure of the unit Euclidean ball in $\setR^n$. 

\subsection{Harmonic almost splitting}

\begin{theorem}\label{thm:presEucl} For any $\eps\in (0,1)$, there exists $\updelta=\updelta(n,\eps)>0$ such that for any closed Riemannian manifold  $(M^n,g)$ satisfying $\mbox{k}_{\rho^2}(M^n,g)\le \updelta$ for some $\rho >0$,  if for some $x \in M$ there exists a harmonic map
$$h\colon B_{\updelta^{-1}\rho}(x)\rightarrow \R^n$$ such that for any $r\in [\rho,\updelta^{-1}\rho)$,
$$ \left(r^2\fint_{B_r(x)} |\nabla dh|^2\di\nu_g\right)^{\frac 12}+\fint_{B_{r}(x)}\left| dh\,{}^tdh-\Id_n\right| \di\nu_g\le \updelta,$$
then $h$ is an $\eps\rho$-almost isometry between $B_\rho(x)$ and $\bB^n_\rho$.
\end{theorem}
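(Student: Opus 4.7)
The strategy is a contradiction-and-compactness argument, patterned on the Cheeger--Colding almost-splitting theorem but carried out in the Kato framework developed in \cite{CMT, CMT2}. By rescaling $g \mapsto \rho^{-2}g$, I reduce to $\rho = 1$: both the Kato constant and the two integral quantities are scale-invariant in the appropriate way. Suppose the conclusion fails for some $\eps \in (0,1)$. Then there exist a sequence $\updelta_i \downarrow 0$, closed Riemannian manifolds $(M_i^n, g_i)$ with $\mbox{k}_1(M_i^n, g_i) \le \updelta_i$, points $x_i \in M_i$, and harmonic maps $h_i \colon B_{\updelta_i^{-1}}(x_i) \to \R^n$, normalized so that $h_i(x_i) = 0$, satisfying the hypothesis with constant $\updelta_i$ on every scale $r \in [1, \updelta_i^{-1})$, yet such that $h_i|_{B_1(x_i)}$ is not an $\eps$-almost isometry onto $\bB^n_1$.

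\medskip

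First I would establish uniform regularity of the $h_i$. Since $h_i$ is harmonic, the Bochner formula for $|dh_i|^2$ involves only the negative part $\Ricm$ of the Ricci tensor. Coupling this with the integral Gram bound, which gives $\fint_{B_r(x_i)}|dh_i|^2 \le n+\updelta_i$, and exploiting the heat-kernel-based elliptic estimates available under a Dynkin condition (cf.\ Proposition \ref{Prop:heatK} and the techniques of \cite{C16, CMT}), one obtains a uniform Lipschitz bound $|dh_i| \le L(n)$ on, say, $B_{1/2}(x_i)$. Combined with the doubling of Proposition \ref{VD} up to scale $1$, Gromov's pre-compactness theorem extracts, along a subsequence, a pointed measured Gromov--Hausdorff limit
$$(M_i^n, g_i, \nu_{g_i}/\nu_{g_i}(B_1(x_i)), x_i) \longrightarrow (X, \dist, \mu, x_\infty),$$
and an Arzelà--Ascoli argument along this subsequence produces a Lipschitz limit map $h_\infty \colon X \to \R^n$ with $h_\infty(x_\infty) = 0$.

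\medskip

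The decisive step is to identify $(X, h_\infty)$. Passing the two smallness hypotheses to the limit through the Kato-adapted harmonic function theory of \cite{CMT}, one obtains that $h_\infty$ is harmonic with $\nabla dh_\infty = 0$ and $dh_\infty\,{}^t dh_\infty = \Id_n$ $\mu$-a.e.; equivalently, the components of $h_\infty$ form an $n$-tuple of parallel, pointwise orthonormal harmonic functions on $X$. The almost-splitting theorem for Kato limits established in \cite{CMT} then forces an isometric splitting $X \cong \R^n \times Y$ with $h_\infty$ the projection onto the Euclidean factor. Non-collapsing---guaranteed by the heat kernel lower bound of Proposition \ref{Prop:heatK}(i) together with the doubling of Proposition \ref{VD}---forces $Y$ to reduce to a single point. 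Thus $X$ is isometric to a Euclidean ball and $h_\infty$ restricts to a Euclidean isometry onto $\bB^n_1$.

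\medskip

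The conclusion is then immediate: the uniform convergence $h_i \to h_\infty$ on $B_1(x_i)$, combined with the pGH convergence of the ambient balls to $\bB^n_1$, shows that $h_i|_{B_1(x_i)}$ is an $\eps$-almost isometry onto $\bB^n_1$ for $i$ large, contradicting our assumption. The chief obstacle is the identification of the limit: implementing a genuine splitting (with vanishing Hessian and orthonormal gradients) in the Kato setting, where a pointwise Ricci lower bound is unavailable, requires the limit-space analysis, harmonic function theory, and non-collapsing machinery developed in \cite{CMT, CMT2}. Once this splitting-and-rigidity step is in place, the rest of the argument is a routine compactness packaging.
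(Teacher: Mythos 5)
Your contradiction-and-compactness framing, the rescaling to $\rho=1$, the passage to a pmGH limit $(X,\dist,\mu,x_\infty)$ and to a limit harmonic map $h_\infty$ with orthonormal gradients, and the endgame via uniform convergence all match the paper. Where you genuinely diverge is the rigidity step that identifies $(X,h_\infty)$ with $(\R^n,\id)$. You invoke an ``almost-splitting theorem for Kato limits'' to get $X \cong \R^n \times Y$ and then kill $Y$ by non-collapsing; the paper instead proves that $\rho_h(x,\cdot):=|h(x)-h(\cdot)|^2 - \dist^2(x,\cdot)$ is subharmonic---using the Li--Yau inequality for the limit heat kernel, the resulting bound $LU \ge -2n$ on $U(t,x,\cdot) = -4t\log((4\pi t)^{n/2}H(t,x,\cdot))$, and Varadhan's formula---and is non-positive (since each $h_\xi$ is $|\xi|$-Lipschitz), then applies a maximum principle to conclude $|h(x)-h(y)|=\dist(x,y)$; surjectivity of $h$ onto $\R^n$ follows from a convexity claim of Cheeger--Tian. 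Your route is, in fact, exactly the one the paper's own Remark after the proof singles out as a valid alternative: since $X$ is $\RCD(0,n)$ by \cite[Theorem 5.10]{CMT}, the Functional Splitting Lemma of \cite[Lemma 1.21]{Antonelli_2019} applies. The paper deliberately avoids $\RCD$ machinery to give a more self-contained argument; its maximum-principle proof uses only the first-order Gram condition on $h_\infty$ and never needs $\nabla dh_\infty = 0$ in the limit.

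Two points to tighten in your sketch. First, your attribution of the splitting theorem to \cite{CMT} is imprecise: what \cite{CMT} supplies is the identification $X \in \RCD(0,n)$; the functional splitting itself comes from the $\RCD$ literature. Second, ``non-collapsing forces $Y$ to reduce to a single point'' is not quite the right mechanism: after the splitting $X \cong \R^n \times Y$ the Euclidean factor already has dimension $n$, and what forces $Y$ to be a point is the \emph{upper} bound $\dim X \le n$ (guaranteed here by the $\RCD(0,n)$ structure, i.e.\ by Bishop--Gromov in the limit). Non-collapsing gives the \emph{lower} bound on dimension and the volume lower bound used in the almost-isometry conclusion, but by itself it would not preclude $Y$ having positive dimension.
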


\begin{rems}\label{cover} \begin{enumerate}[i)]
\item From the proof of Proposition \ref{Prop:Katoforcovering}, we easily see that the result also holds if we assume that $(M^n,g)$ is a normal covering of a closed Riemannian manifold whose deck transformation group is residually finite.
\item A similar statement holds for $\RCD(K,N)$ spaces \cite[Proposition 3.7]{MR4277822}. 
\end{enumerate}
\end{rems}
\proof By scaling,  there is no loss of generality in assuming $\rho=1$, what we do from now on.  We argue by contradiction.   Assume that there exists $\eps\in (0,1)$ and:
\begin{itemize}
\item a sequence of positive numbers $\{\updelta_\alpha\}$ such that $\updelta_\alpha \downarrow 0$,
\item a sequence of pointed closed Riemannian manifolds $\{(M^n_\alpha,g_\alpha,x_\alpha)\}$ such that $\mbox{k}_{1}(M_\alpha,g_\alpha)\le \updelta_\alpha$ for any $\alpha$,
\item a sequence of maps $\{h_\alpha\colon B_{\updelta_\alpha^{-1}}(x_\alpha)\rightarrow \R^n\}$ such that for any $\alpha$, 
\begin{equation}\label{eq:hyp}
\text{$h_\alpha$ is not an $\eps$-almost isometry between $B_1(x_\alpha)$ and $\bB_1^n$}
 \end{equation}
and for any $r\in \left[1,\updelta_\alpha^{-1}\right],$
$$ \left(r^2\fint_{B_r(x_\alpha)} |\nabla dh_\alpha|^2\di\nu_{g_\alpha}\right)^{\frac 12}+\fint_{B_{r}(x_\alpha)}\left| dh_\alpha\,{}^tdh_\alpha-\Id_n\right| \di\nu_{g_\alpha}\le \updelta_\alpha.$$
\end{itemize}
We can assume the following. 
\begin{enumerate}
\item Thanks to \cite[Corollary 2.5, Remark 4.9]{CMT}, the sequence $\{(M^n_\alpha,g_\alpha,x_\alpha)\}$ converges in the pointed measured Gromov-Hausdorff topology to a space $(X,\dist,\mu,x)$ which is infinitesimally Hilbertian in the sense of \cite{GigliMAMS}. This limit space is  endowed with a carré du champ $\Gamma$ and a natural Laplace operator $L$, that is the Friedrichs extension of the quadratic form
$\varphi\in W^{1,2}\mapsto \int_X \di\Gamma(\varphi,\varphi)$. For the precise definitions of $\Gamma$ and $L$, see \cite[Section 1.2]{CMT} and references therein. 
\item By \cite[Proposition E.10]{CMT}, the sequence  $\{h_\alpha\}_\alpha$ converges uniformly on compact sets to a harmonic function
$h=(h_1,\dots,h_n)\colon X\rightarrow \R^n$ and for any $i,j\in \{1,\dots,n\}$,
$$\frac{\di\Gamma}{\di \mu}(h_i,h_j)=\updelta_{i,j}  \qquad \text{$\mu$-a.e.~on $X$.}$$
\end{enumerate}
Moreover,  we know that $(X,\dist,\mu)$ admits a locally Lipschitz heat kernel $H:(0,+\infty)\times X\times X \to (0,+\infty)$ which satisfies the following Li-Yau inequality \cite[Proposition 2.9 and Remark 2.10]{CMT2}:
for any $x\in X$, $t>0$, and $\mu$-a.e.~$y\in X$,
$$|d_y H(t,x,y)|^2- H(t,x,y)\frac{\partial H}{\partial t}(t,x,y)\le \frac{n}{2t} H^2(t,x,y).$$
Let $U\colon (0,+\infty)\times X\times X\rightarrow \R$ be such that for any $x,y \in X$ and $t>0$,
$$H(t,x,y)=\frac{e^{-\frac{U(t,x,y)}{4t}}}{(4\pi t)^{\frac n2}} \, \cdot$$ It is easy to check (see  \cite[Formula (83)]{CMT}) that $U$ satisfies, for any $(x,t)\in X \times (0,+\infty)$,
$$L U(t,x,\cdot)\ge -2n$$
in a weak sense, that is to say,  $v(\cdot)=U(t,x,\cdot)\in W^{1,2}_{loc}$ and 
for any non-negative $\varphi\in W^{1,2}_{c}$:
$$\int_X \di \Gamma(\varphi,v)\ge -2n\int_X\varphi\di\mu.$$
According to Varadhan formula (\cite[Proposition 1.6]{CMT}), for any $x,y\in X$,  $$\lim_{t\to 0} U(t,x,y)=\dist^2(x,y).$$
For any $x,y \in X$, set
\[
\rho(x,y)=|h(x)-h(y)|^2
\]
and note that
$$L \rho(x,\cdot)=-2n,$$
hence for any $t>0$ the function $\rho(x,\cdot)-U(t,x,\cdot)$ is sub-harmonic; passing to the limit $t \downarrow 0$ we get that $\rho(x,\cdot)-\dist^2(x,\cdot)$ is subharmonic.  Moreover, for each $\xi=(\xi_1,\dots,\xi_n)$, the function $h_\xi:=\sum_i \xi_ih_i$ satisfies $|dh_\xi|^2=|\xi|^2 $, so that $h_\xi$ is $|\xi|$-Lipschitz. As a consequence, for any $x,y \in X$,
\[
\rho(x,y) = \sup_{\substack{\xi \in \setR^n\\|\xi|=1}} |\langle \xi, h(x)-h(y) \rangle|^2 = \sup_{\substack{\xi \in \setR^n\\|\xi|=1}} |h_\xi(x)-h_\xi(y)|^2 \le \dist^2(x,y).
\]
 Therefore, the function $\rho(x,\cdot)-\dist^2(x,\cdot)$ is sub-harmonic, non-positive and it reaches its maximum value, zero, at $y=x$, hence it is constantly equal to zero.  Thus we have shown that $h\colon X\rightarrow \R^n$ is an isometry onto its image. But $h(X)$ is closed, convex and according to \cite[Claim 4.1,page 130]{CT19}, its convex hull is $\R^n$.  Then $h$ is an isometry between $(X,\dist)$ and the Euclidean space $\R^n$. By uniform convergence, we get that for $\alpha$ large enough, $h_\alpha\colon B_1(x_\alpha)\rightarrow \R^n$ is an $\eps$-almost isometry between $B_1(x_\alpha)$ and an Euclidean ball of radius $1$; this contradicts \eqref{eq:hyp}.
\endproof

\begin{rem}\label{rk:ref_comment} Our proof avoids the use of the $\RCD$ theory, however a better result could be proven using a recent result by Brué-Naber-Semola \cite{BNS}:\emph{ There is some constant $c(n)\in (0,1)$ so that for any $\eps\in (0,1)$, there exists $\updelta=\updelta(n,\eps)>0$ such that for any closed Riemannian manifold  $(M^n,g)$ satisfying $\mbox{k}_{\rho^2}(M^n,g)\le \updelta$ for some $\rho >0$,  if for some $p \in M$ there exists a harmonic map
$$h\colon B_{c(n)\rho}(p)\rightarrow \R^n$$ satisfying:
$$ \left(\rho^2\fint_{B_{\rho}(p)} |\nabla dh|^2\di\nu_g\right)^{\frac 12}+\fint_{B_{\rho}(p)}\left| dh\,{}^tdh-\Id_n\right| \di\nu_g\le \updelta,$$
then $h$ is an $\eps\rho$-almost isometry between $B_{c(n)\rho}(p)$ and $\bB^n_{c(n)\rho}$.}

This improvement would be the consequence of \cite[Theorem 3.8 and Remark 3.10]{CMT2}, of \cite[Theorem 3.8]{BNS} and of the  following corollary of  \cite[Theorem 4.11]{CMT}:
\emph{ for every $\upeta\in (0,1)$ there is some $\updelta_1 = \updelta_1(n,\eta)>0$ such that if $(M^n,g)$ is a closed Riemannian manifold  $(M^n,g)$ satisfying $\mbox{k}_{\rho^2}(M^n,g)\le \updelta_1$ and $p\in M$, then there is a pointed $\RCD(0,n)$ space  $(X,\dist_X,\mu_X,x)$ (which depends on $p$) such that when $M$ is endowed with the geodesic distance and with the measure $\mu_M=\nu_g/\nu_g\left(B_\rho(p)\right)$, then
$$\dist_{mGH}\left( B_\rho(p), B_\rho(x)\right)\le \upeta \rho.$$
}
\end{rem}

\subsection{Reifenberg regularity result}

Let $(M^n,g)$ be a complete Riemannian manifold. For any $(t,x) \in \setR_+ \times M$, we set
$$ \uptheta(t,x):=(4\pi t)^{\frac n2} H(t,x,x).$$ 
The quantity $\uptheta$ is an on-diagonal heat kernel ratio, as $(4\pi t)^{-\frac{n}{2}}$ is the on-diagonal Euclidean heat kernel. In \cite{CMT,CMT2},  we showed that if $(M^n,g)$ is closed and satisfies a strong Kato bound, then the quantity $\uptheta$ is almost monotone and controls the geometry of $M$.  In this regard, we have at our disposal the following Reifenberg regularity result, which is a consequence of \cite[Theorem 5.19]{CMT2} and the proof of Proposition \ref{Prop:Katoforcovering}.  The case of almost non-negative Ricci curvature was originally proven in \cite[Theorem 7.10]{CJN}.

\begin{prop}\label{Reife} 
Let $f\colon (0,1]\rightarrow \R_+$ be a non-decreasing function satisfying \eqref{eq:SK}.  Then there exists $\upbeta(n,f)>0$ such that for any complete Riemannian manifold $(M^n,g)$ which is a normal covering of a closed Riemannian manifold with residually finite deck transformation, if there exist $x \in M$, $R>0$ and $h:B_{R}(x)\to \setR^n$ harmonic such that:
\begin{itemize}
\item[(1)] $\mbox{k}_{R^2}(M^n,g)\le \upbeta(n,f)$ and $\mbox{k}_{tR^2}(M^n,g)\le f(t)$ for any $t \in (0,1]$,
\item[(2)] $\uptheta(R^2,x)\le 1+\upbeta(n,f)$,
\item[(3)] $\|dh\|_{L^\infty}\le 2$, $h(x)=0$ and 
$$ \left(R^2\fint_{B_R(x)} |\nabla dh|^2\di\nu_{g}\right)^{\frac 12}+\fint_{B_R(x)}\left| dh\,{}^tdh-\Id_n\right| \di\nu_{g}\le \upbeta(n,f),$$
\end{itemize}
then the restriction of $h$ to $B_{3R/4}(x)$ is a diffeomorphism onto its image.\end{prop}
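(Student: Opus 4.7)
The plan is to reduce the covering setup to a closed-manifold problem and then invoke \cite[Theorem 5.19]{CMT2} verbatim, following the approximation scheme from the proof of Proposition \ref{Prop:Katoforcovering}. Write the given manifold as $(\widehat M,\pi^*g)$ where $\pi\colon \widehat M\to M$ is a normal covering of a closed Riemannian manifold $(M^n,g)$ with residually finite deck group $\Gamma$. Using residual finiteness, choose normal subgroups $\Gamma_j\triangleleft\Gamma$ of finite index with $\bigcap_j \Gamma_j=\{1\}$, and form the intermediate closed quotients $\widehat M_j := \widehat M/\Gamma_j$, with factorisations $\pi=\pi_j\circ p_j$.

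Next I would transfer every hypothesis to the closed manifold $\widehat M_j$ for $j$ large. The identity $\mbox{k}_T(\widehat M_j,\pi_j^*g)=\mbox{k}_T(\widehat M,\pi^*g)$ established in the proof of Proposition \ref{Prop:Katoforcovering} preserves the two Kato conditions for free. Residual finiteness yields $j_0$ such that for $j\ge j_0$ the restriction $p_j\colon B_R(x)\to B_R(p_j(x))$ is an isometry; then $h_j := h\circ p_j^{-1}$ is a harmonic map on $B_R(p_j(x))$ inheriting $h_j(p_j(x))=0$, $\|dh_j\|_{L^\infty}\le 2$, together with the integral bounds on $dh_j\,{}^t dh_j-\Id_n$ and $|\nabla dh_j|^2$. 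Finally, by Proposition \ref{convergence}, heat kernels converge locally uniformly, whence $\uptheta_{\widehat M_j}(R^2, p_j(x))\to \uptheta_{\widehat M}(R^2,x)$ as $j\to\infty$; picking $\upbeta(n,f)$ to be a fixed fraction (say one half) of the threshold constant $\upbeta'(n,f)$ from \cite[Theorem 5.19]{CMT2}, one obtains $\uptheta_{\widehat M_j}(R^2, p_j(x))\le 1+\upbeta'(n,f)$ for $j$ large.

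Having matched the hypotheses on a closed Riemannian manifold, I would apply \cite[Theorem 5.19]{CMT2} directly to $h_j$ on $\widehat M_j$ and conclude that its restriction to $B_{3R/4}(p_j(x))$ is a diffeomorphism onto its image. Since $p_j\colon B_R(x)\to B_R(p_j(x))$ is an isometry and $h=h_j\circ p_j$ on $B_R(x)$, this transfers at once to $h$ being a diffeomorphism from $B_{3R/4}(x)\subset\widehat M$ onto its image, as required.

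The main subtlety is the transfer of the density bound $\uptheta(R^2,x)\le 1+\upbeta$. Unlike the Kato integrals, which are preserved exactly by quotienting, $\uptheta$ is a genuinely global heat-kernel quantity: one has
\[
H_{\widehat M_j}(R^2, p_j(x), p_j(x)) = \sum_{\gamma\in \Gamma_j} H_{\widehat M}(R^2, x, \gamma\cdot x),
\]
so $\uptheta_{\widehat M_j}$ equals $\uptheta_{\widehat M}$ plus the tail indexed by $\gamma\ne 1$. Because $\bigcap_j \Gamma_j=\{1\}$ (and, up to passing to $\bigcap_{k\le j}\Gamma_k$, the sequence may be assumed nested), the non-trivial $\Gamma_j$-orbit points of $x$ escape to infinity as $j\to\infty$, and the Gaussian upper bound of Proposition \ref{Prop:heatK} makes this tail arbitrarily small; hence it is absorbed by the slack built into the choice of $\upbeta(n,f)$.
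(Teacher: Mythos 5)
Your proof matches the paper's intended argument exactly: the paper states that Proposition \ref{Reife} ``is a consequence of \cite[Theorem 5.19]{CMT2} and the proof of Proposition \ref{Prop:Katoforcovering},'' which is precisely the reduction you carry out — use residual finiteness to approximate the covering by closed quotients $\widehat M_j$, transfer all hypotheses (the Kato bounds exactly, the harmonic map via the local isometry $p_j$, the density bound with a factor-of-two slack in $\upbeta$), invoke \cite[Theorem 5.19]{CMT2} on the closed $\widehat M_j$, and pull back through $p_j$. You also correctly flag the one genuinely global quantity, $\uptheta$, and your analysis of the deck-group sum $H_{\widehat M_j}(R^2,p_j(x),p_j(x))=\sum_{\gamma\in\Gamma_j}H_{\widehat M}(R^2,x,\gamma\cdot x)$, combined with the Gaussian upper bound from Proposition \ref{Prop:heatK} (available on $\widehat M$ by Proposition \ref{Prop:Katoforcovering}) and the built-in slack, is a correct way to see that the tail vanishes; this is the substance of what the paper outsources to Proposition \ref{convergence}.
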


\section{Albanese Maps}\label{section:albanese}

In this section, we recall the construction of the Albanese maps and derive some relevant properties.

\subsection{Construction of the Albanese maps} Let $(M^n,g)$ be a closed Riemannian manifold. We write $H_1(M,\Z)$ for the first integer-valued homology group of $M$, $H^1(M,\R)$ for its first real-valued cohomology group, $H^1_{dR}(M)$ for its first De Rham cohomology space, and  \[
 \cH^1(M^n,g) \df \left\{\alpha\in \cC^\infty(T^*M) \, : \, d\alpha=d^*\alpha=0\right\}
 \]
for the space of harmonic $1$-forms of $(M^n,g)$.  Let $b_1$ be the first Betti number of $M$.  Then the torsion free part $\Gamma$ of $H_1(M,\mathbb{Z})$ is isomorphic to $\mathbb{Z}^{b_1}$ (hence it is a residually finite group) and satisfies
$$\Gamma= \pi_1(M)/\Lambda$$
where
$$\Lambda\df\left\{\gamma\in \pi_1(M) \, : \,  \int_\gamma\alpha=0 \quad \text{for all $[\alpha]\in H^1(M,\R)$}\right\}.$$
Moreover, $\Gamma$ is the deck transformation group of the normal covering
\[
 \widetilde M/\Lambda \fd \widehat M \stackrel{\pi}{\longrightarrow} M
\]
where $\widetilde M$ is the universal cover of $M$. 

By the Hodge--de Rham theorem,  there exists a normalized $L^2$-orthonormal family of harmonic $1$-forms $\alpha_1,\dots,\alpha_{b_1}$, i.e.
\[
\fint_M \langle \alpha_i,\alpha_j \rangle = \delta_{i,j}
\]
for any $i,j$,  such that $[\alpha_1],\ldots,[\alpha_{b_1}]$ form a basis of $H^1_{dR}(M)$.  We choose $o\in M$ and $\widehat o\in \widehat M$ such that $\pi(\widehat o)=o$.  Then for any $i\in \{1,\ldots,b_1\}$ there exists a unique harmonic function ${\widehat \cA}_i : \widehat M \to \setR$ such that
 $$d\widehat \cA_i=\pi^*\alpha_i \quad \text{ and } \quad \widehat \cA_i\left(\widehat{o}\right)=0.$$ 
This yields a harmonic map 
\begin{equation}\label{defAtop}
\widehat{\cA} := (\widehat \cA_1,\ldots,\widehat \cA_{b_1}) \colon \widehat M\rightarrow \R^{b_1}.
\end{equation}

Let us now consider the linear map
\begin{equation}\label{eq:rho}
 \begin{array}{ccccl}
\rho &  : & \Gamma  &  \to & \setR^{b_1}\\
& & \gamma & \mapsto & \left(\int_\gamma\alpha_1,\dots, \int_\gamma\alpha_{b_1}\right)
\end{array}
\end{equation}
and set
\[
 \bGamma \df \rho(\Gamma).
\]
Then $\rho\colon \Gamma\rightarrow \bGamma$ is an isomorphism and $\bGamma$
 is a lattice of $\R^{b_1}$.  We endow $\R^{b_1} / \bGamma$ with the flat quotient Riemannian metric $g_{\R^{b_1} / \bGamma}$.  Note that $\widehat{\cA}$ is $\bGamma$-equivariant, that is, for any $\gamma\in \Gamma$ and $\widehat{x} \in \widehat{M}$,
\begin{equation}\label{eq:equivariance}
 \widehat{\cA}(\gamma.\widehat{x})=\widehat{\cA}(\widehat{x})+\rho(\gamma).
\end{equation}
Then $\widehat{\mathcal{A}}$ induces a harmonic quotient map
\begin{equation}\label{eq:rho}
 \begin{array}{ccccl}
\cA &  : & M=\widehat M/\Gamma  &  \to & \R^{b_1} / \bGamma\\
& & \Gamma.\widehat{x}  & \mapsto & \widehat{\cA}(\widehat{x})+\bGamma.
\end{array}
\end{equation}
We say that $\mathcal{A}$ is the Albanese map of $M$ and $\widehat{\mathcal{A}}$ is the lifted Albanese map of $M$. Note that by construction, the following diagram commutes:
$$\xymatrix{
{\widehat M}\ar[d]^{\pi}\ar[r]^{\widehat \cA} &{ \  \ \R^{b_1}}\ar[d]^{p}\\
{M}\ar[r]^{\cA }&{\R^{b_1}\!/\bGamma} .
} $$

 \subsection{Some estimates for harmonic $1$-forms}
In the next proposition, we derive some estimates for the elements of $\cH^1(M^n,g)$ under a smallness assumption on the Kato constant.

 \begin{prop}\label{prop:estiharm}  Let $(M^n,g)$ be a closed Riemannian manifold of diameter $D$ such that $\mbox{k}_{D^2}(M^n,g)\le \updelta$ for some $\updelta \in (0,1/(16n))$. Then there exists $C(n)>0$ such that for any $\alpha\in \cH^1(M^n,g)$,
 \begin{enumerate}[i)]
 \item $\displaystyle \|\alpha\|^2_{L^\infty}\le \left(1+C(n)\sqrt[3]{\updelta\,}\,\right)\fint_M |\alpha|^2.$
 \item $\displaystyle \fint_M \left| \, |\alpha|^2- \fint_M |\alpha|^2\, \right|\le C(n)\sqrt[3]{\updelta\,}\,\fint_M |\alpha|^2.$
 \item $\displaystyle D^2\fint_M |\nabla \alpha|^2\le C(n)\updelta\,\fint_M |\alpha|^2.$
 \end{enumerate}
 \end{prop}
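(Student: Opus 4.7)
I would deduce all three estimates from the Bochner formula for a harmonic $1$-form. Setting $u := |\alpha|^2$, one has $\tfrac12\Delta u = -|\nabla\alpha|^2 - \Ric(\alpha,\alpha)$; combined with $\Ric \ge -\Ricm\, g$ this yields the pointwise inequality
$$\Delta u + 2|\nabla\alpha|^2 \le 2\Ricm\, u.$$
My plan is then to derive (i) by a Duhamel argument against the heat semigroup, (iii) by direct integration of this inequality over $M$, and to extract (ii) as an easy consequence of (i).

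For (i) I would drop the gradient term and use $\Delta u \le 2\Ricm\, u$. Since $u$ is smooth and time-independent, the identity $u - e^{-t\Delta}u = \int_0^t e^{-s\Delta}\Delta u\, ds$ yields pointwise
$$u(x) \le (e^{-t\Delta}u)(x) + 2\int_0^t (e^{-s\Delta}(\Ricm\, u))(x)\, ds \le (e^{-t\Delta}u)(x) + 2\|u\|_{L^\infty}\, \mbox{k}_t(M^n,g).$$
The semigroup property together with contractivity of $e^{-s\Delta}$ in $L^\infty$ yields the subadditivity $\mbox{k}_{T_1+T_2}(M^n,g)\le \mbox{k}_{T_1}(M^n,g)+\mbox{k}_{T_2}(M^n,g)$, and hence $\mbox{k}_{cD^2}(M^n,g)\le \lceil c\rceil\updelta$ for any $c>0$. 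In particular the Dynkin condition $\le 1/(16n)$ propagates to all times of order $D^2/\updelta$, so for $t$ in this range with $t>D^2$ Proposition \ref{harpheat} gives $(e^{-t\Delta}u)(x)\le (1+C(n)D/\sqrt{t})\fint_M u\, d\nu_g$. Optimizing by choosing $t = D^2\updelta^{-2/3}$ makes both error terms of order $\sqrt[3]{\updelta}$, and taking the supremum over $x$ and solving for $\|u\|_{L^\infty}$ yields (i).

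For (iii) I would integrate the Bochner inequality over $M$: the Laplacian term vanishes by the divergence theorem, leaving $\fint_M|\nabla\alpha|^2\, d\nu_g \le \|u\|_{L^\infty}\fint_M\Ricm\, d\nu_g$. To control $\fint_M\Ricm$, I would integrate the definition of $\mbox{k}_{D^2}(M^n,g)$ against $d\nu_g(x)/\nu_g(M)$; the symmetry of $H$ and stochastic completeness give $\int_M H(s,x,y)\, d\nu_g(x)=1$, hence $D^2\fint_M\Ricm\, d\nu_g \le \mbox{k}_{D^2}(M^n,g)\le \updelta$. Combining with (i) then proves (iii). Finally (ii) follows at once from (i): with $\overline u := \fint_M u\, d\nu_g$ and $u\ge 0$, the identity $\fint_M (u-\overline u)\, d\nu_g = 0$ gives
$$\fint_M |u-\overline u|\, d\nu_g = 2\fint_M (u-\overline u)_+\, d\nu_g \le 2(\|u\|_{L^\infty}-\overline u)\le 2C(n)\sqrt[3]{\updelta}\, \overline u.$$

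The main technical point is the propagation of the Dynkin condition from the initial time $D^2$ up to the scale $t\simeq D^2\updelta^{-2/3}\gg D^2$ where the heat kernel estimate of Proposition \ref{harpheat} becomes effective; the subadditivity of $\mbox{k}_{cD^2}$ forces $\updelta$ to be taken small enough that $\updelta^{-2/3}\cdot\updelta\le 1/(16n)$. For $\updelta$ in the complementary range near $1/(16n)$ the refined bounds are vacuous and can be absorbed into $C(n)$ via a crude argument at $t\simeq D^2$.
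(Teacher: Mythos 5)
Your argument is correct and follows essentially the same route as the paper: the Duhamel identity you use for (i) is exactly the paper's device of multiplying the Bochner inequality by the heat kernel and integrating in $(t,y)$, and the subadditivity of $\mbox{k}_T$ together with Proposition~\ref{harpheat} at scale $t\simeq D^2\updelta^{-2/3}$ is precisely the paper's choice $\ell\simeq\updelta^{-2/3}$; (iii) by integrating Bochner over $M$ with $D^2\fint_M\Ricm\le\updelta$, and (ii) as an $L^1$-oscillation consequence of the $L^\infty$ bound, are also the paper's arguments. You additionally make explicit the harmless case $\updelta$ near $1/(16n)$, which the paper leaves implicit.
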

 \begin{proof}   Set $N:=\left\lfloor \frac{1}{\updelta 16n}\right\rfloor$.  Proceeding like in \cite[Lemma 2.22]{C16}, for instance, we get that for any $\ell\in  \{1,\dots, N\}$, 
 \begin{equation}\label{dyngrandT}\mbox{k}_{\ell D^2}(M^n,g)\le\ell \, \mbox{k}_{D^2}(M^n,g) \le \ell \updelta\le   \frac{1}{16n} \, \cdot\end{equation}
 
 We first prove i). Let $\alpha\in \cH^1(M^n,g)$. By the Bochner formula,
$$|\nabla\alpha|^2+\frac12 \Delta |\alpha|^2+\Ric(\alpha,\alpha)=0.$$
We fix $x \in M$. We multiply the previous identity evaluated in $y \in M$ by the heat kernel $H(t,x,y)$ and integrate with respect to $(t,y)\in [0,\ell D^2]\times M$, where $\ell\in \{1,\dots, N\}$ is suitably chosen later.  This gives
\begin{align*}
& -  \iint_{[0, \ell D^2]\times M} \! \Ric(\alpha,\alpha)(y) \Delta_y H(t,x,y)\di \nu_g(y) \di t\\
& = \iint_{[0, \ell D^2]\times M}  \!  |\nabla \alpha|^2(y)H(t,x,y)  \di \nu_g(y) \di t \\
& + \frac{1}{2} \iint_{[0, \ell D^2]\times M} \! |\alpha|^2(y) \Delta_y H(t,x,y)\di \nu_g(y) \di t
\end{align*}
Since $\Ric \ge -\Ricm g$ and $\Delta_y H (t,x,y)= \frac{\partial}{\partial t} H(t,x,y)$,  we get
\begin{align*}
 |\alpha|^2(x)&\le \int_M H(\ell D^2,x,y) |\alpha|^2(y)\di\nu_g(y)\\
 & +2\|\alpha\|^2_{L^\infty}\iint_{[0,\ell D^2]\times M} \! H(t,x,y)\Ricm(y)\di\nu_g(y)\di t\\
 &\le\left(1+C(n)\frac{1}{\sqrt{\ell}}\right)\fint_M |\alpha|^2+2\mbox{k}_{\ell D^2}(M^n,g)\,\|\alpha\|^2_{L^\infty} \quad \text{by  Proposition \ref{harpheat}}\\
 &\le\left(1+C(n)\frac{1}{\sqrt{\ell}}\right)\fint_M |\alpha|^2+2\ell\updelta\,\|\alpha\|^2_{L^\infty} \qquad \qquad \, \,\quad \text{by \eqref{dyngrandT}. }
 \end{align*}
Thus
 $$\left(1-2\ell\updelta\right)\,\|\alpha\|^2_{L^\infty}\le \left(1+C(n)\frac{1}{\sqrt{\ell}}\right)\fint_M |\alpha|^2.$$
Choosing $\ell$ of the same order as $\updelta^{-\frac 23}$ yields the desired estimate. \par 
Let us now prove ii).  Consider $\alpha \in \cH^1(M^n,g)$. Up to rescaling, we may assume that
\[
\fint_M |\alpha|^2 = 1.
\]
Then
 \begin{align*}\fint_M\left| |\alpha|^2-1\right|&=\frac{2}{\nu_g(M)}\int_{\{|\alpha|^2\ge 1\}} \left( |\alpha|^2-1\right)\\
 & \le \frac{2\nu_g\left(\{|\alpha|^2\ge 1\}\right)}{\nu_g(M)}C(n)\sqrt[3]{\updelta\,}\\
 & \le 2C(n)\sqrt[3]{\updelta\,},
 \end{align*}
 where we have used that (i) and then $\nu_g\left(\{|\alpha|^2\ge 1\}\right)\le \int_M |\alpha|^2.$
 \par Let us prove iii). We integrate the Bochner formula over $M$, divide by  $\nu_g(M)$, and use i) to get
 $$\fint_M |\nabla \alpha|^2=-\fint_M \Ric(\alpha,\alpha)\le C(n)\fint_M \Ricm\    \fint_M |\alpha|^2.$$
But 
 \begin{align*}
 D^2\fint_M \Ricm&=\frac{1}{\nu_g(M)}\int_{[0,D^2]\times M\times M} H(t,x,y)\Ricm(y)\di\nu_g(y)\di\nu_g(x)\di t\\
 & \le \mbox{k}_{D^2}(M^n,g) \le \updelta.
 \end{align*}
\end{proof}

\begin{rem}
By the Grothendieck Lemma \cite[Theorem 5.1]{Rudin} (see also \cite{Li} and \cite[Théorème 4]{GallotMeyer}), the previous proposition implies that for any  closed Riemannian manifold $(M^n,g)$ of diameter $D$ such that $\mbox{k}_{D^2}(M^n,g)\le \updelta$ for some $\updelta \in (0,1/(16n))$,
$$b_1(M)\le \left(1+C(n)\sqrt[3]{\updelta\,}\,\right)\, n,$$ so that:
$$\updelta<\updelta_n:=\left(n C(n)\right)^{-3} \qquad \Rightarrow \qquad b_1(M)\le n.$$
In particular, this provides another proof of \cite[Proposition 3.12]{C16}.
\end{rem}

\subsection{Consequences for the Albanese maps}
 
The previous estimate imply the following.
 
\begin{prop}\label{propAlbanese} There exists $\updelta(n)>0$ such that if $(M^n,g)$ is a closed Riemannian manifold of diameter $D$ satisfying $b_1(M)=n$ and $\mbox{k}_{D^2}(M^n,g)\le \updelta$ for some $\updelta \in (0,\updelta(n)]$, then the Albanese maps $\cA$ and $\widehat\cA$ satisfy the following properties.
\begin{enumerate}[i)]
\item \label{LipA} They are  $(1+C(n)\sqrt[3]{\updelta\,})$-Lipschitz. 
\item \label{degA} They are surjective.
\item \label{iii}  For any $r\in [D,\updelta^{-1/6}D]$,
\begin{equation}\label{spliA}
 \left(r^2\fint_{B_r(\widehat o)} |\nabla d\widehat\cA|^2\di\nu_{\widehat g}\right)^{\frac 12}+\fint_{B_{r}(\widehat o)}\left| d\widehat\cA\,{}^td\widehat\cA-\Id_n\right| \di\nu_{\widehat g}\le C(n)\sqrt[3]{\updelta}
 \end{equation}
and 
  \begin{equation}\label{volAhatM}
  \nu_{\widehat g}(B_r(\widehat o))\ge \left( 1-C(n)\sqrt[3]{\updelta}\right) \omega_n r^n.
\end{equation}
\end{enumerate}
\end{prop}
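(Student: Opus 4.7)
To prove (i), I use duality. For any unit $\xi\in\R^n$, the 1-form $\alpha_\xi := \sum_i \xi_i\alpha_i$ is harmonic on $M$ with $\fint_M|\alpha_\xi|^2 = 1$ by $L^2$-orthonormality, so Proposition \ref{prop:estiharm}(i) gives $\|\alpha_\xi\|_{L^\infty}^2 \le 1+C(n)\sqrt[3]{\updelta}$. Since $d\widehat\cA_i = \pi^*\alpha_i$ and $\pi$ is a local isometry, $|d\widehat\cA(v)|^2 = \sup_{|\xi|=1}\alpha_\xi(\pi_* v)^2 \le (1+C(n)\sqrt[3]{\updelta})|v|^2$ for any $v\in T\widehat M$, which gives the Lipschitz estimate for $\widehat\cA$, and for $\cA$ by quotient.

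To prove the Hessian and Gram estimates in (iii), I would transfer $L^1$ bounds on $M$ to averaged integrals on balls in $\widehat M$ via the heat kernel. By Proposition \ref{Prop:Katoforcovering}, $\widehat M$ inherits the volume doubling and heat-kernel estimates; combining the fiber-sum identity $H_g(t,o,x) = \sum_{\widehat y\in\pi^{-1}(x)} H_{\widehat g}(t,\widehat o,\widehat y)$ with Proposition \ref{harpheat} on $M$ and the Gaussian lower bound for $H_{\widehat g}$ on $B_{\sqrt t}(\widehat o)$, one obtains that for any nonnegative $f : M\to\R_+$ and $r\ge D$,
\[
\fint_{B_r(\widehat o)} f\circ\pi\,d\nu_{\widehat g} \le C(n)\fint_M f\,d\nu_g.
\]
Applying this to $f = \sum_i|\nabla\alpha_i|^2$ and using Proposition \ref{prop:estiharm}(iii) yields the Hessian bound, since the factor $r^2/D^2 \le \updelta^{-1/3}$ combined with the $\updelta$ factor from Proposition \ref{prop:estiharm}(iii) gives $\updelta^{2/3}$, whose square root is $\sqrt[3]{\updelta}$. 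Applying it to $f = \sum_{i,j}|\langle\alpha_i,\alpha_j\rangle-\delta_{ij}|$, together with Proposition \ref{prop:estiharm}(ii) applied to $\alpha_i\pm\alpha_j$ via polarization, yields the Gram bound.

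For (ii), I would compute $\deg(\cA)$ via the de Rham identity $\deg(\cA)\,\mathrm{vol}(\R^n/\bGamma) = \int_M\alpha_1\wedge\cdots\wedge\alpha_n$. Using the pointwise formula $|\alpha_1\wedge\cdots\wedge\alpha_n| = \sqrt{\det G}$ (with $G_{ij} := \langle\alpha_i,\alpha_j\rangle$), the $L^\infty$ bound on entries of $G$ from (i), and the $L^1$-closeness of $G$ to $\Id_n$ on $M$, elementary linear algebra gives $\fint_M|\sqrt{\det G}-1| \le C(n)\sqrt[3]{\updelta}$, whence $|\deg(\cA)|\,\mathrm{vol}(\R^n/\bGamma) \le (1+C(n)\sqrt[3]{\updelta})^{n/2}\nu_g(M)$. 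Combined with a continuity argument showing that the sign of $\alpha_1\wedge\cdots\wedge\alpha_n$ is essentially constant on $M$ (using that $\{\det G > 1/2\}$ has nearly full measure), this forces $|\deg(\cA)| = 1$. Then $\cA$ is surjective as a nonzero-degree map between closed $n$-manifolds, and so is $\widehat\cA$ by equivariance.

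The main obstacle will be the volume lower bound \eqref{volAhatM}. The plan is to combine (i) and (ii) via the area formula
\[
\nu_{\widehat g}(B_r(\widehat o))\cdot\|\mathrm{Jac}(\widehat\cA)\|_{L^\infty} \ge \int_{B_r(\widehat o)}|\mathrm{Jac}(\widehat\cA)|\,d\nu_{\widehat g} \ge \nu_{\R^n}(\widehat\cA(B_r(\widehat o))),
\]
with $\|\mathrm{Jac}(\widehat\cA)\|_{L^\infty}\le(1+C\sqrt[3]{\updelta})^{n/2}$ from (i). It then remains to show $\widehat\cA(B_r(\widehat o)) \supseteq \bB^n_{(1-C\sqrt[3]{\updelta})r}$ for $r\in[D,\updelta^{-1/6}D]$, which is the delicate point: a natural route is to invoke Theorem \ref{thm:presEucl} via Remark \ref{cover} to obtain almost-isometry of $\widehat\cA|_{B_\rho(\widehat o)}$ onto $\bB^n_\rho$ for $\rho$ in an appropriate subrange, and then use equivariance together with a careful scale analysis to extend the conclusion to the full range of $r$.
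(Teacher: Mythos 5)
Your arguments for \ref{LipA}) and \ref{degA}) match the paper's in substance. For \ref{degA}) you only need $\deg\cA\neq 0$ to get surjectivity, and your integral estimate gives exactly that; the further assertion that $|\deg\cA|=1$ is both unnecessary and unsupported by what you write (from $\deg\cA\approx \nu_g(M)/\mathrm{vol}(\R^n/\bGamma)$ and $\deg\cA\in\Z\setminus\{0\}$ you can only conclude that $\nu_g(M)/\mathrm{vol}(\R^n/\bGamma)$ is near a positive integer, not that this integer is $1$).

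For the transfer inequality $\fint_{B_r(\widehat o)}f\circ\pi\le C(n)\fint_M f$ you take a genuinely different and valid route. The paper argues combinatorially, counting translates of a fundamental domain inside $B_r(\widehat o)$ and invoking volume doubling on $\widehat M$ (via Proposition \ref{Prop:Katoforcovering}). You instead exploit the fiber-sum identity for the heat kernel together with the Gaussian lower bound for $H_{\widehat g}$ on $\widehat M$ and Proposition \ref{harpheat} on $M$. Both approaches need Proposition \ref{Prop:Katoforcovering} to get good behavior on $\widehat M$; yours additionally needs the Gaussian lower bound, while the paper's needs only doubling, so the paper's is slightly more elementary, but yours is perfectly sound as long as you check that the Dynkin time $T$ on $\widehat M$ exceeds $r^2$ throughout the range $r\le\updelta^{-1/6}D$ (it does, with $T\simeq\updelta^{-1/3}D^2$). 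The subsequent derivation of \eqref{spliA} from Proposition \ref{prop:estiharm} is correct, including the exponent bookkeeping $r^2/D^2\cdot\updelta\le\updelta^{2/3}$.

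Where you go astray is the volume lower bound \eqref{volAhatM}. You correctly reduce, via the area formula and the Lipschitz bound, to a surjectivity-type statement, but you aim for the pointwise inclusion $\widehat\cA(B_r(\widehat o))\supseteq\bB^n_{(1-C\sqrt[3]{\updelta})r}$. This is stronger than what is needed, and it is why you are led to Theorem \ref{thm:presEucl} (plus degree theory) and then to an uncontrolled ``careful scale analysis'' to cover the full range $r\in[D,\updelta^{-1/6}D]$: the hypotheses of Theorem \ref{thm:presEucl} at scale $\rho$ require the estimates on the whole range $[\rho,\updelta_0^{-1}\rho)$, so at the top of your range those hypotheses run out, and equivariance alone does not repair this. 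What is actually needed — and what the paper uses — is only the \emph{measure} estimate $\cH^n\bigl(\bB^n_r\setminus\widehat\cA(B_r(\widehat o))\bigr)\le C(n)r^n\sqrt[3]{\updelta}$, which is precisely Theorem \ref{thm=presquesurjective} (a Cheeger--Colding type almost-surjectivity lemma). Its hypotheses are single-scale bounds on $B_r(\widehat o)$, so it applies directly for each $r$ in the range once you have \eqref{spliA} and the Lipschitz bound (using Cauchy--Schwarz to pass from the $L^2$ Hessian bound to the $L^1$ one), and together with your area-formula inequality it yields \eqref{volAhatM} with no extension step. In short: replace Theorem \ref{thm:presEucl} by Theorem \ref{thm=presquesurjective} in the last part and the argument closes.
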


\proof[Proof] Let $\pi$ be the projection map from $\widehat M$ to $M$, and let $(\alpha_1,\ldots,\alpha_n)$ be the orthonormal basis of harmonic $1$-forms used to build $\widehat \cA$.  We let $C(n)>0$ be a generic constant depending only on $n$ whose value may change from line to line.

Let us prove \ref{LipA}).  For $\xi=(\xi_1,\dots,\xi_n)\in \R^n$, set $ \alpha:=\sum_i\xi_i\alpha_i \in \cH^1(M^n,g)$ and observe that 
 $$|\xi|^2 = \fint_M |\alpha|^2.$$
Then for any $ x\in \widehat M$ and $ v\in T_{x} \widehat M$, since $d \widehat \cA = (\pi^*\alpha_1,\ldots,\pi^*\alpha_n)$,
 $$ \left|\langle \xi,d_{x}\widehat\cA({v})\rangle\right|=\left|\alpha(\pi( x))(d_{x}\pi(v))\right|\le |\xi| \left(1+C(n)\sqrt[3]{\updelta\,}\,\right)^{\frac12}\sqrt{\widehat g_{x}(v,v)},$$
where we have applied i) in Proposition \ref{prop:estiharm}. This yields \ref{LipA}).

Let us now prove \ref{degA}).  Proposition \ref{prop:estiharm} implies that $\Omega:=\alpha_1\wedge\dots\wedge\alpha_n$ satisfies
 $$\fint_M \left| |\Omega|-1\right|\di\nu_g\le C(n)\sqrt[3]{\updelta\,}.$$
 Hence if $M$ is oriented and up to permutation of $\alpha_1$ and $\alpha_2$ then 
 $$\left|\int_M \Omega-\nu_g(M)\right|\le C(n)\sqrt[3]{\updelta\,} \nu_g(M),$$
 but by construction, setting  ${\bf\Omega}:=dx_1\wedge\dots\wedge dx_n$, we have $\Omega=\cA^{*}{\bf\Omega}$, hence 
 $$ \left| \deg \cA-\frac{\nu_g(M)}{\text{vol}\, \R^n/\bGamma} \right| = \frac{1}{\text{vol}\, \R^n/\bGamma} \left|\int_M\cA^{*}{\bf\Omega}-\nu_g(M)\right|\le C(n)\sqrt[3]{\updelta\,}\frac{\nu_g(M)}{\text{vol}\, \R^n/\bGamma} \, \cdot$$
 Hence if $C(n)\sqrt[3]{\updelta\,}<1$, then $\deg \cA\not=0$ and $\cA$ is surjective.  If $M$ is not oriented, then using the two-fold oriented cover $M_o\stackrel{\pi_o}{\longrightarrow} M$, the same argumentation can be applied to $\pi_o^*\Omega$ in order to get that $\cA\circ \pi_o\colon M_o\rightarrow \R^n/\bGamma$ is surjective.

Let us now prove \ref{iii}).  Observe that \eqref{volAhatM} is a direct consequence of \eqref{spliA} and \cite[Theorem 1.2]{cheeger_structure_2000} (see also Theorem \ref{thm=presquesurjective}). Thus we are left with proving \eqref{spliA}. To this aim,  we use the following result: for any $f \in L^1(M)$ and $r\in [D,\sqrt{N}D]$,
\begin{equation}\label{eq:claim}
\fint_{B_r(\widehat o)}\left|f\circ\pi\right|\le C(n) \fint_M |f|.
\end{equation}
Together with ii) in Proposition \ref{prop:estiharm}, this implies that for any $r\in [D,\sqrt{N}D],$
$$\fint_{B_r(\widehat o)} \left| d\widehat\cA \, ^td\widehat\cA-\Id_n\right| \le C(n)\sqrt[3]{\updelta},$$
and similarly iii) in Proposition \ref{prop:estiharm} yields that if $r\in [D,\updelta^{-\frac16}D],$ then
$$r^2\fint_{B_r(\widehat o)} \left| \nabla d\widehat\cA\right|^2\le C(n) \updelta \left(\frac{r}{D}\right)^2\le C(n)\updelta^{\frac23}.$$
Thus we are left with proving \eqref{eq:claim}. Let $\cD\subset B_D(\widehat o)$ be a fundamental domain for $\Gamma \longrightarrow \widehat M\stackrel{\pi}{\longrightarrow} M.$ Set $$G(r)=\left\{\gamma \in \Gamma : \gamma \cD\cap B_r(\widehat o)\not=\emptyset\right\}.$$
Then
$$B_r(\widehat o)\subset \bigcup_{\gamma \in G(r)} \gamma \cD\subset B_{r+D}(\widehat o),$$
so that
$$\nu_{\hat{g}}(B_r(\widehat o))\le \#G(r)\nu_{\hat{g}}(\cD)=\#G(r)\nu_g(M)\le \nu_{\hat{g}}(B_{r+D}(\widehat o)).$$
The group $\Gamma\simeq \Z^n$ is residually finite, hence 
Proposition \ref{Prop:Katoforcovering} and Proposition \ref{VD}
imply the volume doubling estimate: for any $0<r\le R\le \sqrt{N}D$,
$$ \nu_{\hat{g}}(B_R(\widehat o))\le C(n) \left(\frac{R}{r}\right)^{e^2n} \nu_{\hat{g}}(B_r(\widehat o)).$$
If $r \ge D$ and $r + D \le \sqrt{N} D$, this yields
$$\#G(r)\nu_g(M)\le C(n)\left(\frac{r+D}{r}\right)^{e^2n} \nu_{\hat{g}}(B_r(\widehat o))\le C(n)\nu_{\hat{g}}(B_r(\widehat o)),$$
so that 
\begin{align*}
\fint_{B_r(\widehat o)}\left|f\circ\pi\right|=\frac{1}{\nu_{\hat{g}}(B_r(\widehat o))} \sum_{\gamma\in G(r)}\int_{\cD\cap B_r(\widehat o)} \left|f\circ\pi\right|
&\le \frac{\#G(r)\nu_g(M)}{\nu_{\hat{g}}(B_r(\widehat o))} \fint_{M} \left|f\right|\\
&\le C(n) \fint_{M} \left|f\right|.
\end{align*}
If $r + D >  \sqrt{N} D$, then the volume doubling estimate yields (see e.g.~\cite[Subsection 2.3]{HSC})
\[
\nu_{\hat{g}}(B_{r+D}(\widehat o)) \le C(n)^{\frac{r+D}{r}}\nu_{\hat{g}}(B_{r}(\widehat o))\le C(n)\nu_{\hat{g}}(B_{r}(\widehat o)),
\]
and we can conclude in the same way as above.

\endproof

\section{Proof of Theorem A}

In this section, we prove Theorem \ref{Thm:A}.  All the way through we consider $\eps\in (0,1)$ and
 \begin{equation*}
\upeta:=\frac{\eps}{640 n^2}\quad\text{ and }\quad R:=64n^{2} D.
\end{equation*}
Let $(M^n,g)$ be a closed Riemannian manifold  of diameter $D$ such that 
$$b_1(M)= n \quad \text{ and } \quad \mbox{k}_{D^2}(M^n,g)\le \updelta 
$$
for some $\updelta\in (0,1/(16n))$. We consider the covering $\Gamma \longrightarrow \widehat{M} \stackrel{\pi}{\longrightarrow} M$ built in the previous section, and associated Albanese maps $\widehat{\cA}$ and $\cA$. Then the following holds.

\begin{claim}
There exists $\updelta_0(\eps,n)>0$ such that if $\updelta \le \updelta_0(\eps,n)$, then $\widehat{\cA}$  satisfies the following.
\begin{enumerate}
\item[$(a)$] $\widehat \cA$ is $(1+\upeta)$-Lipschitz.
\item[$(b)$] $\widehat \cA\colon B_{R}(\widehat o)\rightarrow \bB_R^n$ is an $\upeta R$-almost isometry.
\item[$(c)$] \label{(iii)} $ \bB_{(1-\upeta)R}^n\subset \widehat \cA\left( B_{R}(\widehat o)\right).$
\end{enumerate}
\end{claim}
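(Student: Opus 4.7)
My plan is to set $\updelta_1 := \updelta(n, \upeta)$, where $\updelta(\cdot,\cdot)$ denotes the threshold function supplied by Theorem \ref{thm:presEucl}, and then choose $\updelta_0(\eps, n) > 0$ small enough that a handful of explicit inequalities (detailed in what follows) are satisfied; these will validate (a), (b), (c) in turn.

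For (a), I would invoke Proposition \ref{propAlbanese}(\ref{LipA}) directly: the Lipschitz constant of $\widehat\cA$ is $1 + C(n)\sqrt[3]{\updelta}$, which is at most $1+\upeta$ once $C(n)\sqrt[3]{\updelta_0} \le \upeta$.

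For (b), I would apply Theorem \ref{thm:presEucl} in the covering version of Remark \ref{cover}(i) (applicable since $\Gamma \simeq \Z^n$ is residually finite) with $h = \widehat\cA$, scale $\rho = R$, and tolerance $\upeta$. The hypotheses to check are $\mbox{k}_{R^2}(\widehat M, \widehat g) \le \updelta_1$, and that $\widehat\cA$ satisfies the Hessian and Gram-matrix bounds $\le \updelta_1$ at every scale $r \in [R, \updelta_1^{-1}R)$. For the first, Proposition \ref{Prop:Katoforcovering} reduces the bound to $\mbox{k}_{R^2}(M,g)$, which by the subadditivity estimate \eqref{dyngrandT} is at most $(64n^2)^2 \updelta$, and is therefore $\le \updelta_1$ when $\updelta_0 \le \updelta_1/(64n^2)^2$. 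For the second, Proposition \ref{propAlbanese}(\ref{iii}) supplies the required bounds with constant $C(n)\sqrt[3]{\updelta}$ on the range $[D, \updelta^{-1/6}D]$, which contains $[R, \updelta_1^{-1}R]$ as soon as $\updelta_0 \le (\updelta_1/(64n^2))^6$, and the bound is $\le \updelta_1$ when $C(n)\sqrt[3]{\updelta_0} \le \updelta_1$. Theorem \ref{thm:presEucl} would then yield the $\upeta R$-almost isometry in (b).

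For (c), my plan is to use a Brouwer degree argument on $F := \widehat\cA|_{\overline{B_R(\widehat o)}}$, which is smooth because $\widehat\cA$ is harmonic. Applied to any $x \in \partial B_R(\widehat o)$, the almost isometry (b) gives $|F(x)| \ge R - \upeta R = (1-\upeta)R$, so $F(\partial B_R(\widehat o)) \cap \bB^n_{(1-\upeta)R} = \emptyset$, and the Brouwer degree $\deg(F, B_R(\widehat o), y)$ is well-defined for each $y \in \bB^n_{(1-\upeta)R}$ and, by homotopy invariance, constant on this connected set. I would evaluate this common value at $y=0$ using the pullback formula
\[
\deg(F, B_R(\widehat o), 0) = \int_{B_R(\widehat o)} F^*\omega
\]
for a bump top-form $\omega$ near $0$ with $\int\omega = 1$. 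Since (b) forces $F^{-1}(0) \cap B_R(\widehat o) \subset \overline{B_{\upeta R}(\widehat o)}$, well inside $B_R(\widehat o)$, and since the Gram-matrix estimate of Proposition \ref{propAlbanese}(\ref{iii}) gives $|\det d\widehat\cA|$ close to $1$ in $L^1$-average on $B_R(\widehat o)$, the integral should be close to $\pm 1$ once an orientation of $\widehat M$ is fixed compatibly with the positive sign of $\deg\cA$ obtained in Proposition \ref{propAlbanese}(\ref{degA}) (passing to the orientation double cover of $M$ if necessary, as in that proof). Standard degree theory would then conclude that $y \in F(B_R(\widehat o))$ for every $y \in \bB^n_{(1-\upeta)R}$, establishing (c). The most delicate point is this orientation/sign control on $\det d\widehat\cA$: without it, cancellations in the pullback integral could in principle make the degree vanish, so the argument hinges on importing the positive sign of $\deg\cA$ from Proposition \ref{propAlbanese}(\ref{degA}).
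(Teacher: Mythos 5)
Your proof of $(a)$ and $(b)$ is correct and follows the same route as the paper: $(a)$ is a direct reading of Proposition \ref{propAlbanese}\,\emph{\ref{LipA})}, and $(b)$ comes from Theorem \ref{thm:presEucl}, invoked on the cover via Remark \ref{cover}\,(i), together with the parameter bookkeeping you spell out (subadditivity of the Kato constant, its invariance under the covering from Proposition \ref{Prop:Katoforcovering}, and the scale range $[D,\updelta^{-1/6}D]\supset[R,\updelta_1^{-1}R)$ from Proposition \ref{propAlbanese}\,\emph{\ref{iii})}). The paper itself gives only a citation here, so nothing more is expected.

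For $(c)$, the paper also defers to degree theory and external references (\cite{ChPisa}, \cite{Gallot}, \cite{CMT}), so your choice of method is aligned; however, there is a genuine gap in the way you propose to evaluate the degree, and you yourself name it as the delicate point. The $L^1$-average estimate on the Gram matrix from Proposition \ref{propAlbanese}\,\emph{\ref{iii})} controls $|\det d\widehat\cA| = \bigl(\det(d\widehat\cA\,{}^t d\widehat\cA)\bigr)^{1/2}$ but says nothing about the \emph{sign} of $\det d\widehat\cA$, so the signed pullback integral $\int_{B_R(\widehat o)}F^*\omega$ could a priori be much smaller than $\int_{B_R(\widehat o)}|F^*\omega|$ through sign cancellations, without this being excluded by the $L^1$ bound. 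The proposed fix, ``importing the positive sign of $\deg\cA$ from Proposition \ref{propAlbanese}\,\emph{\ref{degA})}'', does not directly resolve this: $\deg\cA\neq 0$ is a global statement about the quotient map on $M$, whereas your computation needs pointwise (or at least measure-theoretic) sign control on the Jacobian of $\widehat\cA$ over $B_R(\widehat o)$, and these are distinct pieces of information. The orientation issue (whether $\widehat M$ is orientable, and if not, how to pass to a further cover) is another real point you correctly flag but do not carry out. A tighter route to a nonzero degree, closer to what the references carry out, is either to combine $(b)$ with the quantitative almost-surjectivity of Theorem \ref{thm=presquesurjective} and the area formula to rule out a vanishing degree, or to use properness of $\widehat\cA$ on $\widehat M$ together with the equivariance to relate the degree over $B_R(\widehat o)$ to $\deg\cA$ on $M$; either way the sign discussion needs to be made explicit rather than inferred from the $L^1$ Gram matrix bound.
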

This is a consequence of Proposition \ref{propAlbanese}, Theorem \ref{thm:presEucl} and i) in Remark \ref{cover}. The last assertion may be proven with degree theory, see \cite{ChPisa}, \cite[Proof of 3.2 and 3.3]{Gallot}, \cite[Proof of theorem 7.2]{CMT}. 

From now on, we assume that
\[
\updelta \le \updelta_0(\eps,n).
\]

\hfill

\textbf{Step 1. } Proceeding like in \cite{Col97,Gallot}, we construct a normal subgroup $\Gamma_0$ of $\Gamma$ with finite index, such that $\widehat{\cA}$ induces a map
\[
\cA_0\colon M_0 := \widehat M/\Gamma_0 \rightarrow \setR^n/\rho(\Gamma_0).
\]

Let $(e_1,\dots, e_n)$ be the canonical basis of $\R^n.$ Since $4 \sqrt{n}D \le (1-\upeta)R$, it follows from $(c)$ that for any $i \in \{1,\dots,n\}$, there exists $x_i \in B_R(\widehat o)$ such that
$$4\sqrt{n} De_i=\widehat \cA(x_i).$$ 
Moreover, for any $i \in \{1,\dots,n\}$, there exists $\gamma_i \in \Gamma$ such that $$\dist_{\widehat{g}}\left(\gamma_i.\widehat o,x_i\right)\le D.$$
Then we set
\[
\Gamma_0:=\langle \gamma_1,\dots,\gamma_n\rangle \subset \Gamma
\]
and
\[
\bGamma_0:= \rho(\Gamma_0) = \bigoplus_{i=1}^n \rho(\gamma_i)\Z.
\]
Let us show that
\begin{equation}\label{eq:basis}
\text{$\rho(\gamma_1),\dots,\rho(\gamma_n)$  form a basis of $\R^n$.}
\end{equation}
We know by $(a)$ that  the map $\widehat \cA$ is $(1+\upeta)$-Lipschitz.  Then for any $i \in \{1,\ldots,n\}$,  since the equivariance \eqref{eq:equivariance} of $\widehat \cA$ yields $\rho(\gamma_i)=\widehat \cA(\gamma_i.\widehat o)$, we have
\begin{equation}\label{eq:tobasis} | 4\sqrt{n} De_i -\rho(\gamma_i)  | =\left|\widehat \cA(x_i)- \widehat \cA(\gamma_i.\widehat o)\right|\le (1+\upeta)\dist_{\widehat{g}}(\gamma_i.\widehat o,x_i)\le (1+\upeta) D.\end{equation}
Then for any $\xi=\sum_{i=1}^n \xi_i e_i \in \R^n$, 
\begin{align*}
\left| 4\sqrt{n}  D \xi - \sum_{i=1}^n \xi_i\rho(\gamma_i) \right|  & = 
\left|\sum_{i=1}^n \xi_i( 4\sqrt{n}  De_i-\rho(\gamma_i))\right| \\
& \le (1+\upeta)D\sum_{i=1}^n |\xi_i| \le 2 D \sqrt{n}|\xi| ,
 \end{align*}
 so that \begin{equation}\label{eq:norm_xi}
 |  \xi | \le \frac{1}{2D\sqrt{n}}\left|\sum_{i=1}^n \xi_i\rho(\gamma_i)  \right|.
\end{equation}
Hence we get \eqref{eq:basis}. This implies that the quotient $\R^n/\bGamma_0$ is a torus $\bT^n$ which we equip with the natural flat quotient Riemannian metric $g_{\R^n/\bGamma_0}$. We also equip $M_0$ with the quotient Riemannian metric $g_0$ induced by $\widehat{g}$.

\hfill

\textbf{Step 2. } We establish the following diameter bound on $(M_0,g_0)$:
\begin{equation}\label{diam_bound}
\mathrm{diam}(M_0)\le 4(n+\sqrt{n})D.
\end{equation}

To this aim,  let us prove an intermediary result: if $\gamma \in \Gamma_0$ is such that
\begin{equation}\label{aI_assum}
|\rho(\gamma)|\le \frac{R}{2\sqrt{n}+1},
\end{equation}
then
\begin{equation}\label{aI}
\left| \dist_{\widehat g}(\widehat o,\gamma.\widehat o)-|\rho(\gamma)|\, \right|\le \upeta R.
\end{equation}
Write $\gamma \in \Gamma_0$ as 
$$\gamma=\gamma_1^{k_1}\dots\gamma_n^{k_n}$$
for some $k_1,\ldots,k_n \in \Z$.  Consider $i \in \{1,\ldots,n\}$. From $(b)$, we know that $|\dist_{\widehat g}(x_i,\widehat o)- |\widehat \cA(x_i) - \widehat \cA(\widehat o) | \, | \le \upeta R$. Since $\widehat \cA(\widehat o)=0$ and $\widehat \cA(x_i) =4 \sqrt{n}De_i$, this implies
\[
\dist_{\widehat g}(x_i,\widehat o) \le \upeta R + 4 \sqrt{n}D \le (1+4\sqrt{n})D.
\]
Then
$$\dist_{\widehat g}(\widehat o,\gamma.\widehat o)\le \sum_i|k_i|\, \max_i \dist_{\widehat g}(\widehat o,\gamma_i.\widehat o) \le (2+4\sqrt{n})D\sum_i|k_i|.$$
Since $\rho(\gamma)=\sum_{i}k_i \rho(\gamma_i),$  it follows from \eqref{eq:norm_xi} that
$$\sum_i|k_i|\le \sqrt{n}\left( \sum_i k_i^2\right)^{1/2}\le \frac{| \rho(\gamma)|}{2D} \, \cdot$$
Then we get $\dist_{\widehat g}(\widehat o,\gamma.\widehat o)\le (2\sqrt{n} +1)| \rho(\gamma)|$, so that \eqref{aI_assum} implies 
\begin{equation}
\gamma.\widehat o \in B_R(\widehat{o})
\end{equation}
and the conclusion \eqref{aI} follows from $(b)$.

We are now in a position to prove the diameter bound \eqref{diam_bound}.  Introduce the Dirichlet domain
$$\mathcal{D}_0 \coloneqq \left\{x\in \widehat M : \dist_{\widehat g}(x,\widehat o)\le  \dist_{\widehat g}(\gamma.x,\widehat o) \,\,\, \text{for all $\gamma\in \Gamma_0\setminus\{1\}$}\right\}.$$
We are going to show that 
\begin{equation}\label{eq:empty}
\mathcal{D}_0  \cap \left\{x\in \widehat M : \dist_{\widehat g}(x,\widehat o)= 2(n+\sqrt{n})D \right\} = \emptyset;
\end{equation}
then the connectedness of $\mathcal{D}_0$ will imply $\mathcal{D}_0 \subset B_{ 2 (n+\sqrt{n})D}(\widehat o)$ and \eqref{diam_bound} will be established.

The set $\cF_0 \df \sum_{i=1}^n \left[-\frac12,\frac12\right)\rho(\gamma_i)$ is a fundamental domain for the action of $\bGamma_0$ on $\R^n$; it is included in the Euclidean ball centered at the origin with radius
$$\frac{\sqrt{n}}{2}\max_i |\rho(\gamma_i)|. $$
By \eqref{eq:tobasis}, for any $i$,
$$|\rho(\gamma_i)| \le |4 \sqrt{n} D e_i|+ (1+\upeta)D \le  \left( 4 \sqrt{n}+2\right)D,$$so that
$$\frac{\sqrt{n}}{2}\max_i |\rho(\gamma_i)| \le (2n+\sqrt{n})D.$$
For any $x\in \widehat M$ there exists $\gamma_0 \in \Gamma_0$ such that $\widehat{\mathcal A}(\gamma_0.x) \in \cF_0$.  By the equivariance \eqref{eq:equivariance} of $\widehat \cA$ and the previous inequality, we get that
\begin{equation}\label{eq:control}
\left| \widehat\cA(x)+\rho(\gamma_0)\right| = \left| \widehat\cA(\gamma_0.x)\right|\le(2n+\sqrt{n})D.
\end{equation}

Now assume that $\dist_{\widehat g}(x,\widehat o)= 2(n+\sqrt{n})D$. We are going to show that $x \notin \mathcal{D}_0$. Since $2(n+\sqrt{n})D\le R$, from $(b)$ we get
$$\left| \widehat\cA(x)\right| = \left| \widehat\cA(x)- \widehat\cA(\widehat o)\right| \le  2(n+\sqrt{n})D +\upeta R.$$ Consequently,
\[
|\rho(\gamma_0)| \le\left|\widehat{\cA}(x) + \rho(\gamma_0)\right|  + \left|\widehat{\cA}(x)\right| \le (4n+3\sqrt{n})D +\upeta R.
\]
By our choices of $\upeta$ and $R$ we have
\[
(4n+3\sqrt{n})D +\upeta R \le \frac{R}{2 \sqrt{n}+1} \, \cdot
\]
Then we are in a position to apply \eqref{aI}. We get
$$\dist_{\widehat g}(\gamma_0.\widehat o,\widehat o)\le (4n+3\sqrt{n})D + 2\upeta R$$
and then
\begin{align*}
\dist_{\widehat{g}}(\gamma_0.x,\widehat o) & \le \dist_{\widehat{g}}(\gamma_0.x,\gamma_0.\widehat o)+\dist_{\widehat{g}}(\gamma_0.\widehat o,\widehat o) \le 2(n+\sqrt{n})D+(4n+3\sqrt{n})D + 2\upeta R.
\end{align*}
Since
\[
2(n+\sqrt{n})D+(4n+3\sqrt{n})D + 2\upeta R \le R,
\]
we can use $(b)$ and \eqref{eq:control} to deduce that
\[\dist_{\widehat g}(\gamma_0.x,\widehat o)\le \left| \widehat\cA(\gamma_0.x)\right|+\upeta R \le (2n+\sqrt{n})D+\upeta R  <2(\sqrt{n}+2n)D=\dist_{\widehat g}(x,\widehat o).\]
Thus $x \notin \mathcal{D}_0$ and \eqref{eq:empty} is proven.

\hfill

\textbf{Step 3.} We prove that $\cA_0:M_0 \to \setR^n/\bGamma_0$ is a $3 \upeta R$-almost isometry.  From Proposition \ref{propAlbanese}, we know that $\widehat \cA$ is surjective,  hence $\cA_0$ is surjective too. Thus we are left with proving the distance estimate. Let us introduce the following intermediate projection maps $\pi_0$ and $p_0$: 
$$\xymatrix{
{\widehat M}\ar[d]^{\pi_0}\ar[r]^{\widehat \cA} &{ \  \ \R^{n}}\ar[d]^{p_0}\\
{M_0}\ar[r]^{\cA_0 }&{\, \R^{b_1}\!/\bGamma_0} .
} $$

 Let $x,y\in M_0$.  Since $\mathcal{D}_0 \subset B_{ 2(n+\sqrt{n})D}(\widehat o)$, we can choose $\widehat x\in B_{ 2(n+\sqrt{n})D}(\widehat o)$ such that $\pi_0(\widehat x)=x$.  Let $c\colon [0,1]\rightarrow M_0$  be a  minimizing geodesic joining $x$ and $y$.  Let $\widehat c\colon [0,1]\rightarrow \widehat M$ be such that $\pi_0\circ \widehat c=c$ and $\widehat x=\widehat c(0).$ By the diameter bound \eqref{diam_bound}, we know that $\widehat y:=\widehat c(1)$ belongs to $B_{6(n+\sqrt{n})D}(\widehat o)\subset  B_{R}(\widehat o)$. Moreover, $\pi_0(\widehat y)=y$. Thus
\begin{align}\label{GHun}
\dist_{g_{\R^n/\bGamma_0}}(\cA_0(x),\cA_0(y))=\dist_{g_{\R^n/\bGamma_0}}\left(p_0(\widehat\cA(\widehat x)),p_0(\widehat\cA(\widehat y))\right) & \le \left|\widehat\cA(\widehat x)-\widehat\cA(\widehat y)\right| \nonumber\\
& \le \dist_{g_0}(x,y)+\upeta R\end{align}
thanks to $(b)$.

It remains to prove that
\[
\dist_{g_0}(x,y) - \dist_{g_{\R^n/\bGamma_0}}(\cA_0(x),\cA_0(y))  \le  3 \upeta R.
\]
We start by showing that if $\cA_0(x)=\cA_0(y)$ then $\dist_{g_0}(x,y) \le \upeta R$.  Since $\cA_0 \circ  c = \cA_0 \circ\pi_0 \circ \widehat{c} = p_0 \circ \widehat{\cA} \circ \widehat{c},$  the curve $\widehat{\cA} \circ \widehat{c}$ is a lift of the curve $\cA_0 \circ c$ joining $\vec v:=\widehat \cA(\widehat x)\in \R^n$ to $\vec w:=\widehat \cA(\widehat y)$.  Moreover,  the length of $\widehat{c}$ is less than $4(n+\sqrt{n})D$,  hence $(b)$ implies that the length of $\widehat{\cA} \circ \widehat{c}$ is less than $4(1+\upeta)(n+\sqrt{n})D$. Since $p_0(\vec v)=p_0(\vec w)$ there exists $\gamma\in \Gamma_0$ such that
\[
\widehat \cA(\widehat y)= \vec w=\vec v+\rho(\gamma)=\widehat \cA(\widehat x)+\rho(\gamma)
 \]
and 
$$| \rho(\gamma)|\le 4(1+\upeta)(n+\sqrt{n})D\le 8 (n+\sqrt{n})D.$$
As a consequence, notice that $\gamma^{-1}.\widehat{y}$ satisfies 
\begin{align*}
\dist_{\widehat{g}}(\gamma^{-1}.\widehat y,\widehat o) \le \dist_{\widehat{g}}(\widehat o,\gamma.\widehat o)+\dist_{\widehat{g}}(\widehat y,\widehat o)& \le \upeta R+ |\rho(\gamma)|+6(n+\sqrt{n}) D\\ & \le 14(\sqrt{n}+n)D+\upeta R,
\end{align*}
where we used that $\widehat{\cA}$ is an $\upeta R$-almost isometry in the second inequality. Since $14(\sqrt{n}+n) D +\upeta R \le R$, we get that $\gamma^{-1}y \in B_R(\widehat{o})$. Moreover, $\widehat \cA(\widehat y)=\widehat \cA(\widehat x)+\rho(\gamma)$, thus by the invariance of $\widehat \cA$ we have $\widehat \cA(\gamma^{-1}.\widehat y)=\widehat \cA(\widehat x)$. Then we can apply $(b)$ and obtain
\[
\dist_{g_0}(x,y) \le \dist_{\widehat g}(\gamma^{-1} . \widehat{y},\widehat{x}) \le \upeta R.
\]

Now assume that $v\coloneqq \cA_0(x)$ and $w\coloneqq \cA_0(y)$ are distinct.  We can choose $\vec v$ and $\vec w$ such that $p_0(\vec v)=v$,  $p_0(\vec w)=w$,
$$\dist_{g_{\R^n/\bGamma_0}}(v,w)=|\vec v-\vec w| \qquad \text{and} \qquad \vec v,\vec w\in \bB_{2(\sqrt{n}+n)D}^n.$$
Since $2(\sqrt{n}+n)D<(1-\upeta)R$, from $(c)$ we know that there exist $\widehat x',\widehat y'\in B_R(\widehat o)$ such that
$\widehat \cA(\widehat x')=\vec v$ and $\widehat \cA(\widehat y')=\vec w$.
Then $\cA_0(\pi_0(\widehat x'))=p_0(\widehat\cA(\widehat x'))=v=\cA_0(x)$ and $\cA_0(\pi_0(\widehat y'))=p_0(\widehat\cA(\widehat y'))=w=\cA_0(y)$, thus
\begin{align*}
\dist_{g_0}(x,y) & \le 2 \upeta R+\dist_{g_0}(\pi_0(\widehat x'),\pi_0(\widehat y')) \quad \quad \, \, \,\,  \text{by the previous paragraph}\\
& \le 2 \upeta R+\dist_{\widehat g}(\widehat x',\widehat y') \\
& \le 3 \upeta R+|\vec v-\vec w| \qquad \qquad  \quad \text{by $(b)$}\\
& =3 \upeta R+\dist_{g_{\R^n/\bGamma_0}}(\cA_0(x),\cA_0(y)).
\end{align*}

Thus we have shown that $\cA_0\colon M_0\rightarrow \R^n/\bGamma_0$ is a $3\upeta R$-almost isometry. 

\hfill

\textbf{Step 4.} We conclude. Repeating the arguments of Step 3  with the commutative diagram
$$\xymatrix{
{M_0}\ar[d]^{}\ar[r]^{\cA_0} &{ \, \R^{n}\!/\bGamma_0}\ar[d]^{}\\
{M}\ar[r]^{\cA_0 }&{\, \R^{n}\!/\bGamma} .
} $$

we get that $\cA\colon M\rightarrow\R^n/\bGamma$ is a $9\upeta R$-almost isometry. Since $9 \upeta R=9\eps D/10$, this concludes the proof of Theorem \ref{Thm:A}.

\section{Proof of Theorem \ref{Thm:B}}
In this section, we prove Theorem \ref{Thm:B}. Let $f\colon [0,1]\rightarrow \R_+$ be a non-decreasing function satisfying \eqref{eq:SK}.  Let $(M^n,g)$ be a closed Riemannian manifold of diameter $D$ such that
\begin{itemize}
\item[•] $b_1(M)=n$,
\item[•] $\mbox{k}_{D^2}(M^n,g)\le \updelta$ for some $\updelta \in (0,1/(16n))$,
\item[•] $\mbox{k}_{tD^2}(M^n,g)\le f(t)$ for any $t \in (0,1]$.
\end{itemize}
We consider the covering $\Gamma \longrightarrow \widehat{M} \stackrel{\pi}{\longrightarrow} M$ built in Section 4, and associated Albanese maps $\widehat{\cA}$ and $\cA$.  Set $\widehat{g}\coloneqq\pi^*g$.  From \eqref{eq:katochapeau}, we get that
\begin{itemize}
\item $\mbox{k}_{(2D)^2}(\widehat{M}^n,\widehat{g})  \le 4 \updelta$,
\item $\mbox{k}_{t(2D)^2}(\widehat{M}^n,\widehat{g})  \le 4f(t)$ for any $t \in (0,1]$.
\end{itemize}
Let $\upbeta(n,4f)$ be given by Proposition  \ref{Reife}. Set
\[
\upeta \coloneqq \upbeta(n,4f)/4.
\]
Let $\widehat{H}$ be the heat kernel of $\widehat{M}$, and $\widehat{\uptheta}(t,x)\coloneqq (4 \pi t)^{n/2}\widehat{H}(t,x,x)$ for any $(t,x)\in \setR_+ \times \widehat{M}$.  Then the following holds.

\begin{claim}\label{claimThB} 
There exists $\updelta_0(n,f)  \in (0,\upeta]$ such that if $\updelta \le \updelta_0(n,f)$,  then 
\begin{equation}\label{estitheta}
\widehat{\uptheta}((2D)^2,\widehat o)\le 1+\upeta
\end{equation}
and
\begin{equation}\label{estieta}
 \left((2D)^2\fint_{B_{2D}(\widehat o)} |\nabla d\widehat\cA|^2\di\nu_{\widehat g}\right)^{\frac 12}+\fint_{B_{2D}(\widehat o)}\left| d\widehat\cA\,{}^td\widehat\cA-\Id_n\right| \di\nu_{\widehat g}\le \upeta.
\end{equation}
\end{claim}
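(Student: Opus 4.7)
For \eqref{estieta}, my plan is a direct application of part iii) of Proposition \ref{propAlbanese} at scale $r = 2D$. The hypotheses apply to $(\widehat M, \widehat g)$ because $\Gamma \simeq \Z^n$ is residually finite, so Proposition \ref{Prop:Katoforcovering} transfers the Kato bound and all downstream volume and Albanese estimates from $M$ to $\widehat M$. Taking $r = 2D$ requires $2D \le \updelta^{-1/6} D$, i.e.\ $\updelta \le 2^{-6}$. The resulting bound of order $C(n)\sqrt[3]{\updelta}$ is made $\le \upeta$ by imposing $\updelta_0(n,f) \le \min\bigl\{(\upeta/C(n))^3,\, 2^{-6},\, 1/(16n)\bigr\}$.

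For the more delicate \eqref{estitheta}, I follow the two-step strategy outlined in the introduction. \emph{Step 1.} Control the Gaussian mass
\[
J(t) := \int_{\widehat M} \frac{e^{-\dist_{\widehat g}^2(\widehat o, y)/(4t)}}{(4\pi t)^{n/2}}\,\di\nu_{\widehat g}(y)
\]
at $t = (2D)^2$ and show $|J(t) - 1|$ is small. By the layer-cake identity
\[
J(t) = \int_0^\infty \frac{r\, e^{-r^2/(4t)}}{2t(4\pi t)^{n/2}}\, \nu_{\widehat g}(B_r(\widehat o))\,\di r,
\]
this reduces to comparing $r \mapsto \nu_{\widehat g}(B_r(\widehat o))$ with the Euclidean $\omega_n r^n$. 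On the main range $r \in [D,\updelta^{-1/6}D]$, the near-Euclidean lower bound \eqref{volAhatM} together with the $(1+C\sqrt[3]{\updelta})$-Lipschitz character of $\widehat{\cA}$ from Proposition \ref{propAlbanese} provides matching two-sided control; for $r < D$ I would invoke the doubling estimate of Proposition \ref{VD}, transferred by Proposition \ref{Prop:Katoforcovering}; for $r > \updelta^{-1/6}D$ the Gaussian weight absorbs any excess volume growth. The resulting comparison yields $|J((2D)^2) - 1| \le C(n) \sqrt[3]{\updelta}$.

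\emph{Step 2.} Compare $\widehat H(t,\widehat o,\widehat o)$ to the Euclidean value $(4\pi t)^{-n/2}$ through a Cheeger--Yau-type argument \cite{CheegerYau} under the Kato bound. Setting $G(s, y) := (4\pi s)^{-n/2}e^{-\dist_{\widehat g}^2(\widehat o, y)/(4s)}$, a direct computation together with the Bochner formula for $\dist_{\widehat g}^2(\widehat o, \cdot)/2$ shows that $(\partial_s - \Delta_y)G = G \cdot \mathcal R$, where the error $\mathcal R$ is controlled by $\Ricm$. The Duhamel expansion
\[
\widehat H(t, \widehat o, \widehat o) - G(t, \widehat o) = \int_0^t\!\int_{\widehat M} \widehat H(t-s, \widehat o, y)\,\mathcal R(s, y)\,G(s, y)\,\di\nu_{\widehat g}(y)\,\di s
\]
then estimates the defect, after swapping integration orders, by $\mbox{k}_t(\widehat M, \widehat g) \le 4\updelta$ up to constants depending only on $n$. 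Combining this with Step 1 and the Li--Yau monotonicity $\widehat \uptheta(\cdot, \widehat o) \ge 1$ (which forces $\widehat \uptheta$ to sit just above $1$) yields $\widehat \uptheta((2D)^2, \widehat o) \le 1 + \upeta$ after shrinking $\updelta_0(n,f)$ once more.

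The main obstacle is Step 2: the classical Cheeger--Yau comparison assumes a pointwise $\Ric \ge 0$, whereas here only an integrated smallness of $\Ricm$ via the Kato constant is available. Bridging this gap requires the Duhamel estimate to be anchored on the near-equality $J((2D)^2) \approx 1$ from Step 1 as a normalisation, so that the error coming from $\Ricm$ enters only linearly in $\updelta$ or $\sqrt[3]{\updelta}$ and can be absorbed into $\upeta$ by the final choice of $\updelta_0(n,f)$.
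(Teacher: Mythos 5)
Your treatment of \eqref{estieta} matches the paper's: a direct application of Proposition~\ref{propAlbanese}~iii) at $r=2D$ on $(\widehat M,\widehat g)$, justified through Proposition~\ref{Prop:Katoforcovering} since $\Gamma\simeq\Z^n$ is residually finite, with the same constraint $\updelta_0\le\min\{2^{-6},(\upeta/C(n))^3,\updelta(n)\}$.

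For \eqref{estitheta}, however, your Step~2 has a genuine gap. The Duhamel scheme you sketch requires $(\partial_s-\Delta_y)G=G\cdot\mathcal R$ with $\mathcal R$ controlled pointwise by $\Ricm(y)$, but $\Delta_y\dist_{\widehat g}^2(\widehat o,y)$ is governed by the Laplacian comparison theorem, whose error term involves integrals of Ricci curvature along minimizing geodesics (through the index form and second fundamental form of distance spheres), not the pointwise value $\Ricm(y)$. This is exactly what blocks the classical Cheeger--Yau maximum-principle comparison in the Kato setting, and the paper bypasses it differently: it invokes the Li--Yau gradient estimate from \cite{C16}, which \emph{does} hold under the Dynkin condition, integrates it into a Harnack inequality for $\widehat H$, and sends $s\downarrow0$ to obtain the lower bound $\bH_\eps(t,\cdot,\cdot)\le\widehat H(t,\cdot,\cdot)$ for the tilted Gaussian $\bH_\eps$ of \eqref{eq:almostEucl}. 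That is the precise place where the strong Kato hypothesis \eqref{eq:SK} is spent, to control $\int_0^{\ell D^2}\sqrt{\mathrm{k}_\tau(M^n,g)}\,\di\tau/\tau$; your proposal never locates where \eqref{eq:SK} enters, which is a symptom that the route is off. Two further points would fail as stated: your Step~1 asks for two-sided comparison of $\nu_{\widehat g}(B_r(\widehat o))$ with $\omega_n r^n$, but only the lower bound \eqref{volAhatM} is available (the Lipschitz bound on $\widehat\cA$ gives no upper volume bound), and the paper needs only the lower bound because the extra factor $(1+\eps)$ in $\bH_\eps$ tilts the comparison to one side; and the appeal to ``Li--Yau monotonicity $\widehat\uptheta\ge1$'' is not available under a Kato bound --- that lower bound requires $\Ric\ge0$ --- so the paper instead uses the almost-monotonicity $\widehat\uptheta((2D)^2,\widehat o)\le\widehat\uptheta(\ell D^2,\widehat o)\,e^{C(n)\sqrt{\updelta\ell}}$ from \cite[Lemma~5.7]{CMT2} together with a semigroup-law decomposition of $\widehat\uptheta(\ell D^2,\widehat o)$ into the pieces $I$ and $II$.
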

This claim puts us in a position to apply Proposition \ref{Reife}: we get that the map $\widehat\cA\colon B_{3D/2}(\widehat o)\rightarrow \R^n$ is  a diffeomorphism onto its image.  Therefore, the Albanese map $\cA:M \to \R^n/\bGamma$ is a local diffeomorphism,  hence it is a finite cover. Since a torus is finitely covered by tori only, we get that $M$ is diffeomorphic to a torus.  Then the Albanese map is necessarily a diffeomorphism, by construction. As a consequence, the conclusion of Theorem \ref{Thm:B} holds.

Let us now prove Claim \ref{claimThB}. From Proposition \ref{propAlbanese}, we know that if we choose $\updelta_0(n,f)\le \min(\updelta(n),2^{-6}, (C(n)^{-1} \upeta)^3)$, then \eqref{estieta} holds.  Let us prove that we can also choose $\updelta_0(n,f)$ such that \eqref{estitheta} holds.  To this aim, we introduce the following almost Euclidean heat kernel on $\widehat{M}$: for any $\eps \in (0,1)$,  $x,y \in \widehat{M}$ and $t>0$,  we set
\begin{equation}\label{eq:almostEucl}
\mathbb{H}_\eps(t,x,y) := \frac{1}{(1+\eps)(4\pi t)^{n/2}} e^{-(1+\eps)\frac{\dist_{\widehat g}^2(x,y)}{4t}}.
\end{equation}

\textbf{Step 1}. We prove the following Cheeger--Yau type estimate: for any $\eps \in (0,1)$ and any integer $\ell \ge 4$, there exists $\updelta_1(n,f,\eps,\ell)>0$ such that if
\[
\katoD\le \updelta \le \updelta_1(n,f,\eps,\ell),\]
then for any $x,y \in \widehat{M}$ and $t\in (0, \ell D^2]$,
\begin{equation}\label{eq:step1}
\mathbb{H}_\eps(t,x,y) \le \widehat{H}(t,x,y).
\end{equation}

Let us set $\Gamma(\tau)\coloneqq e^{8 \sqrt{n\mbox{k}_{\tau}(M^n,g)}}$ for any $\tau>0$.  Since $\widehat{M}$ satisfies the Dynkin condition \eqref{Dynkin}, it satisfies the Li-Yau inequality from \cite[Proposition 3.3]{C16}. Then we can proceed as in the proof of \cite[Proposition 2.12]{CMT} to get that for any $s \in (0,t)$, any positive solution $u$ of the heat equation on $\widehat M\times [0,\ell D^2]$ satisfies
\[
\log\left(\frac{u(s,x)}{u(t,y)}\right) \le  \left( \frac{t}{s} \right)^{n/2} e^{\Gamma(t)\frac{\dist_{\widehat{g}}^2(x,y)}{4(t-s)}}e^{\frac{n}{2}\int_s^t\frac{\Gamma(\tau)-1}{\tau} \di \tau}\,\cdot
\]
Apply this inequality with $u(\cdot,\cdot)=\widehat H (\cdot, x, \cdot)$ to get
\begin{align*}
\frac{(4 \pi s)^{n/2}\widehat H (s, x, x)}{(4 \pi t)^{n/2}\widehat H (t, x, y)} & \le e^{\Gamma(t)\frac{\dist_{\widehat{g}}^2(x,y)}{4(t-s)}}e^{\frac{n}{2}\int_s^t\frac{\Gamma(\tau)-1}{\tau} \di \tau}  \\
& \le e^{\Gamma(\ell D^2)\frac{\dist_{\widehat{g}}^2(x,y)}{4(t-s)}}e^{C(n)\int_0^{\ell D^2}  \sqrt{\mbox{\small k}_{\tau}(M^n,g)} \frac{\di \tau}{\tau}} \, \cdot
\end{align*}
Letting $s \downarrow 0$ yields
\[
\frac{e^{-\Gamma(\ell D^2)\frac{\dist_{\widehat{g}}^2(x,y)}{4t}}}{(4 \pi t)^{n/2}} e^{-C(n)\int_0^{\ell D^2}  \sqrt{\mbox{\small k}_{\tau}(M^n,g)} \frac{\di \tau}{\tau}} \le \widehat H (t, x, y).
\]
Since $\mbox{k}_{\ell D^2}(M^n,g)\le \ell \mbox{k}_{D^2}(M^n,g) \le \ell \updelta$, we know that there exists $\updelta_2(n,f,\eps,\ell)>0$ such that if $\updelta \le \updelta_2(n,f,\eps,\ell)$, then
\[
\Gamma(\ell D^2) \le 1+ \eps.
\]
To conclude, let us show that there exists $\updelta_1(n,f,\eps,\ell)\le \updelta_2(n,f,\eps,\ell)$ such that if $\updelta \le \updelta_1(n,f,\eps,\ell)$, then
\begin{equation}\label{eq:tofind}
e^{C(n)\int_0^{\ell D^2}  \sqrt{n \mbox{\small k}_{\tau}(M^n,g)} \frac{\di \tau}{\tau}} \le 1+\eps.
\end{equation}
Since $\mbox{k}_{\tau}(M^n,g)\le f(\tau/D^2)$ for any $\tau \in (0,D^2)$,  for any $\sigma \in (0,1)$ we have
\begin{align*}\int_0^{\ell D^2} \sqrt{\mbox{k}_{\tau}(M^n,g)}\frac{\di \tau}{\tau}&\le  \int_0^{\sigma D^2} \sqrt{\mbox{k}_{\tau}(M^n,g)} \frac{\di \tau}{\tau} +\log(\ell/\sigma)\sqrt{\mbox{k}_{\ell D^2}(M^n,g)}  \\
& \le \int_0^\sigma \sqrt{f(\tau)}\frac{\di \tau}{\tau}+\log(\ell/\sigma) \sqrt{\ell \mbox{k}_{D^2}(M^n,g)}.
\end{align*}
Therefore, to get \eqref{eq:tofind},  first we choose $\sigma(f) \in (0,1)$ such that 
$$ e^{ C(n) \int_0^{\sigma(f)} \frac{\sqrt{f(s)}}{s} \di s } \le \sqrt{1+\eps},$$ then we choose $\updelta_1(n,f,\eps,\ell) \le \updelta_2(n,f,\eps,\ell)$ such that 
$$e^{C(n)\log(\ell/\sigma(f))\sqrt{\ell \, \updelta_1(n,f,\eps,\ell)}}\le  \sqrt{1+\eps}.$$

\textbf{Step 2.} We prove that for any $\eps \in (0,1)$ and $t>D^2$,
\begin{equation}\label{eq:step2}
\int_{\widehat{M}}\mathbb{H}_\eps(t,\widehat{o},y) \di \nu_{\widehat{g}} (y) \ge \frac{1-C(n) \sqrt[3]{\updelta}}{\left(1+\eps\right)^{\frac n2 +1}}\left(1-C(n)\left(\frac{D}{\sqrt{t}}\right)^{n+2}-C(n)e^{-\frac{D^2}{5\sqrt[3]{\updelta}t}} \right).
\end{equation}
Set $s:=t/(1+\eps)$. Then
$$\int_{\widehat M} \bH_\eps(t,\widehat o,y)\di\nu_{\widehat g}(y)=\frac{1}{(1+\eps)^{1+\frac n2}}\int_{\widehat M} \frac{1}{(4\pi s)^{\frac n2}}e^{-\frac{\dist_{\widehat{g}}^2(\widehat o,y)}{4s}}\di\nu_{\widehat g}(y).$$
By Cavalieri's principle,
\begin{align*}\int_{\widehat M} \frac{1}{(4\pi s)^{\frac n2}}e^{-\frac{\dist_{\widehat{g}}^2(\widehat o,y)}{4s}}\di\nu_{\widehat g}(y)&=\int_0^{+\infty}\frac{e^{-\frac{r^2}{4s}}}{(4\pi s)^{\frac n2}}\frac{r}{2s}\nu_{\widehat g}\left(B_r(\widehat o)\right) \di r \\
&\ge \int_{D}^{D\updelta^{-\frac16}}\frac{e^{-\frac{r^2}{4s}}}{(4\pi s)^{\frac n2}}\frac{r}{2s}\nu_{\widehat g}\left(B_r(\widehat o)\right) \di r\\
&\ge \left(1-C(n)\sqrt[3]{\updelta}\right)\int_{D}^{D\updelta^{-\frac16}}\frac{e^{-\frac{r^2}{4s}}}{(4\pi s)^{\frac n2}}\frac{r}{2s}\omega_nr^n \di r,\end{align*}
where we use the volume lower bound \eqref{volAhatM} to get the last line.  Set
\[
\phi_{n,s}(r):=\frac{e^{-\frac{r^2}{4s}}}{(4\pi s)^{\frac n2}}\frac{r}{2s}\omega_nr^n
\]
and note that 
\[
\int_0^{+\infty}\phi_{n,s}(r) \di r= 1.
\]
Then
\begin{equation}\int_{\widehat M} \frac{1}{(4\pi s)^{\frac n2}}e^{-\frac{\dist_{\widehat{g}}^2(\widehat o,y)}{4s}}\di\nu_{\widehat g}(y)\ge \left(1-C(n)\sqrt[3]{\updelta}\right)\left(1-\int_{0}^{D}\phi_{n,s}(r) \di r - \int_{D\updelta^{-\frac16}}^{+\infty}\phi_{n,s}(r) \di r\right).\end{equation}
A direct computation shows that
\begin{equation}\int_{0}^{D}\phi_{n,s}(r) \di r\le C(n) \left(\frac{D}{\sqrt{s}}\right)^{n+2} \quad \text{and} \quad \int_{D\updelta^{-\frac16}}^{+\infty}\phi_{n,s}(r) \di r\le C(n)e^{-\frac{D^2}{5\sqrt[3]{\updelta}s}}.\end{equation}
Hence we get \eqref{eq:step2}.\\

\textbf{Step 3.} We conclude. Consider $\eps \in (0,1)$. From the proof of \cite[Lemma 5.7]{CMT2},  we know that for any integer $\ell \ge 4$,
\[
\widehat{\uptheta}((2D)^2,\widehat o)\le \widehat{\uptheta}(\ell D^2,\widehat o) e^{C(n)\sqrt{\updelta\ell}}.
\]
We are going to show that we can choose $\updelta$ small enough and $\ell$ large enough to ensure that
\begin{equation}\label{constraint2}
\widehat{\uptheta}(\ell D^2,\widehat o)\le \sqrt{1+\upeta}
\end{equation}
and
\begin{equation}\label{constraint1}
e^{C(n)\sqrt{\updelta\ell}} \le \sqrt{1+\upeta}.
\end{equation}
For the sake of brevity,  let us set $\tau: = \ell D^2$.  Assume that $\updelta \le \updelta_1(n,f,\eps,\ell)$ where $ \updelta_1(n,f,\eps,\ell)$ is given by Step 1. The semi-group law yields
\begin{align}\label{eq:I}
\widehat{\uptheta}(\tau,\widehat o)&=\left(4\pi\tau\right)^{\frac n2}\int_{\widehat{M}} {\widehat H}^2\left(\tau/2,\widehat o,y\right)\di\nu_{\widehat g}(y)\nonumber \\
&=\left(4\pi\tau\right)^{\frac n2}\int_{\widehat{M}} \left({\widehat H}^2\left(\tau/2,\widehat o,y\right)-\bH^2_\eps\left(\tau/2,\widehat o,y\right)\right)\di\nu_{\widehat g}(y)\nonumber \\
&\qquad\qquad\qquad\qquad\qquad\qquad+\frac{1}{1+\eps}\int_{\widehat{M}}\bH_\eps(\tau/4,\widehat o,y)\di\nu_{\widehat g}(y) \nonumber \\
& \fd I + II.\end{align}
By Step 1 and stochastic completeness,
\begin{equation}\label{eq:III}
II \le \frac{1}{1+\eps} \le 1.
\end{equation}
By Step 1 we also  know that $0 \le \widehat{H}^2 - \bH_\eps^2 = (\widehat{H} - \bH_\eps)(\widehat{H} + \bH_\eps) \le (\widehat{H} - \bH_\eps)2 \widehat{H}$. Moreover, the heat kernel upper bound from Proposition \ref{Prop:heatK} and the volume lower bound \eqref{volAhatM} imply that if
\begin{equation}\label{constraint3}
\ell\updelta^{\frac 13}\le 2,
\end{equation}
then
$$\widehat H\left(\tau/2,\widehat o,y\right)\le \frac{C(n)}{\tau^{\frac n2}} \, ,$$
so that
\begin{align}\label{eq:II}
I &\le \left(4\pi\tau\right)^{\frac n2}\!\!\int_{\widehat  M}  \left(\widehat {H}\left(\tau/2,\widehat o,y\right)-\bH_\eps\left(\tau/2,\widehat o,y\right)\right) 2 \widehat {H} \left(\tau/2,\widehat o,y\right)\di\nu_{\widehat g}(y)\nonumber \\
& \le C(n) \int_{\widehat  M}  \left(\widehat {H}\left(\tau/2,\widehat o,y\right)-\bH_\eps\left(\tau/2,\widehat o,y\right)\right)\di\nu_{\widehat g}(y) \nonumber \\
& =  C(n)\left(1-\int_{\widehat  M}\bH_\eps\left(\tau/2,\widehat o,y\right)\di\nu_{\widehat g}(y)\right),
\end{align}
by stochastic completeness.  By combining \eqref{eq:I}, \eqref{eq:III}, \eqref{eq:II}, and thanks to Step 2, we eventually get
\begin{align*}
\widehat{\uptheta}(\tau,\widehat o)&\le 1+C(n)\left(1-\frac{1-C(n)\sqrt[3]{\updelta}}{\left(1+\eps\right)^{\frac n2 +1}}\left(1-C(n)\left(\frac{1}{\sqrt{\ell}}\right)^{n+2}-C(n)e^{-\frac{1}{5\sqrt[3]{\updelta}\ell}}\right)\right)\\
&\le 1+C(n)\left(\eps+\sqrt[3]{\updelta}+\left(\frac{1}{\sqrt{\ell}}\right)^{n+2}+e^{-\frac{1}{5\sqrt[3]{\updelta}\ell}}\right).\end{align*}
Then we choose successively:
\begin{enumerate}
\item $\eps\in (0,1)$ such that $C(n)\eps\le \frac{\upeta}{12},$
\item $\ell\ge 4$ such that $C(n)\left(\frac{1}{\sqrt{\ell}}\right)^{n+2}\le \frac{\upeta}{12},$
\item $\updelta\le \updelta_1(n,f,\eps,\ell)$ such that \eqref{constraint1} and \eqref{constraint3} hold together with
$$C(n)\sqrt[3]{\updelta}\le \frac{\upeta}{12}\qquad \text{and} \qquad C(n)e^{-\frac{1}{5\sqrt[3]{\updelta}\ell}}\le \frac{\upeta}{12} \, \cdot $$
\end{enumerate}
This implies $\widehat{\uptheta}(\ell D^2,\widehat o) \le 1+ \upeta/3 \le \sqrt{1+\upeta}$ and concludes the proof.

\section{Appendix : Almost surjectivity}

In this appendix, we point out that almost splitting maps are almost surjective, without any assumption on the Ricci curvature.  We single out this fact from the proof of \cite[Theorem 1.2]{cheeger_structure_2000} (see also \cite[Section 2]{CheegerColdingTian} for variants).

\begin{theorem}\label{thm=presquesurjective} Let $(M^n,g)$ be a complete Riemannian manifold and $k\in\{1,\ldots,n\}$. There exist $\upeta(n,k)\in (0,1)$ and $C(n,k)>0$ such that for any $o \in M$ and $r>0$, if there exists $\Phi\colon B_r(o) \rightarrow \R^k$ smooth and $\eps\in (0,\upeta(n,k))$ such that
\begin{enumerate}[i)]
\item $\Phi(o)=0$,
\item $\| d\Phi \|_{L^\infty(B_r(o))}\le 1+\eps$,
\item $\fint_{B_r(o)} \left|d\Phi ^t d\Phi-\Id_k\right|\di\nu_g\le \eps$,
\item $r\fint_{B_r(o)} \left|\nabla d\Phi\right|\di\nu_g\le \eps$,
\end{enumerate}
then
$$\cH^k\left(\bB^k_r\setminus \Phi(B_r(o))\right)\le C(n,k)r^k\ \eps.$$
\end{theorem}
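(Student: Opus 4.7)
Following the strategy of \cite[Theorem 1.2]{cheeger_structure_2000}, I would argue via a minimum-principle combined with Sard's theorem. For any $v \in \bB_r^k \setminus \Phi(B_r(o))$, consider the smooth function $f_v(x) \coloneqq \tfrac{1}{2}|\Phi(x) - v|^2$ on $\overline{B_r(o)}$. Since $f_v(o) = |v|^2/2$, its minimum on $\overline{B_r(o)}$ is positive and at most $|v|^2/2$. At an interior critical point $x$ of $f_v$, one has ${}^t(d\Phi_x)(\Phi(x) - v) = 0$; if $d\Phi_x$ has full rank $k$ then $\Phi(x) = v$, contradicting $v \notin \Phi(B_r(o))$. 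Hence all interior critical points of $f_v$ outside $\Phi^{-1}(v)$ lie in the critical set of $\Phi$, and by Sard's theorem, for $\cH^k$-a.e.~such $v$ the minimum of $f_v$ on $\overline{B_r(o)}$ is attained on the boundary $\partial B_r(o)$.

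The next step is to show that a boundary minimum forces $|v|$ to be close to $r$, confining the exceptional set to a thin Euclidean shell. Let $x_v \in \partial B_r(o)$ be such a minimizer and $\gamma\colon [0,r] \to \overline{B_r(o)}$ a minimizing unit-speed geodesic from $o$ to $x_v$. The upper bound $|\Phi(x_v)| \le (1+\eps) r$ is immediate from ii). The delicate lower bound $|\Phi(x_v)| \ge (1 - C(n,k)\eps)r$ should follow by expanding $\Phi\circ\gamma$ and controlling both $|d\Phi(\dot\gamma)|$ (close to $1$ on average thanks to iii)) and the rotation rate of $d\Phi(\dot\gamma)$ along $\gamma$ (small thanks to iv)). These averaged bounds are upgraded to near-pointwise control along $\cH^{n-1}$-almost every geodesic emanating from $o$ via a Cheeger--Colding-style segment inequality, obtained after a Fubini argument in polar coordinates around $o$. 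Combined with the minimum property $|\Phi(x_v) - v| \le |v|$ and the triangle inequality, this yields $|v| \ge (1 - C(n,k)\eps) r$, so that $v$ lies in the Euclidean shell $\bB_r^k \setminus \bB_{(1-C(n,k)\eps)r}^k$, whose $\cH^k$-measure is $\le C(n,k) \eps r^k$, as desired.

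The main obstacle will be the segment inequality: converting the $L^1$ bounds iii) and iv) into almost-pointwise control along a large family of radial geodesics from $o$, without any curvature assumption on $(M^n,g)$. The Jacobian of $\exp_o$ is not directly controlled by the hypotheses, so one cannot invoke Bishop--Gromov; it must instead be bootstrapped from the fact that $\Phi$ is $(1+\eps)$-Lipschitz with Gram matrix close to $\Id_k$ on average, together with standard Fubini and covering arguments on the unit sphere of directions at $o$, in the spirit of \cite{cheeger_structure_2000}. Once the integral bounds iii) and iv) are transferred to most radial geodesics, the rest of the argument reduces to a one-dimensional estimate along $\gamma$.
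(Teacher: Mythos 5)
Your proposal takes a genuinely different route from the paper's (and, incidentally, from Cheeger–Colding's own Theorem~1.2, which you cite as your model but which in fact proceeds by the coarea formula, as the paper here does). The paper avoids the entire Sard/segment-inequality machinery: it introduces the weighted fiber-volume $J(z)=\int_{\Phi^{-1}(z)}\chi(w^2)\,\di\cH^{n-k}$, transfers the problem to the Euclidean target via the coarea formula, and bounds $\cH^k(\bB^k_{1+\eps}\setminus\Phi(B))$ using only the \emph{Euclidean} Poincaré inequality on $\bB^k_{1+\eps}$ together with an estimate of $|\nabla J|$ by $\int |\nabla d\Phi|/w$. None of these steps needs any curvature hypothesis on $(M^n,g)$, which is essential since the theorem is stated for an arbitrary complete Riemannian manifold.

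Your route has several genuine gaps, the most serious of which you flag yourself: there is no segment inequality available here. The Cheeger--Colding segment inequality requires a Ricci lower bound (or at least volume doubling of $(M^n,g)$), which is not assumed, and it cannot be ``bootstrapped'' from conditions ii)--iv): those control $d\Phi$ on average over $B_r(o)$, but say nothing about the Jacobian of $\exp_o$ in directions tangent to the fibers of $\Phi$, so they do not let you pass from an $L^1$ bound to near-pointwise control along $\cH^{n-1}$-a.e.\ radial geodesic. Two further problems: (1) The Sard argument does not show that the minimum of $f_v$ is at the boundary for $\cH^k$-a.e.\ exceptional $v$. For $v \notin \Phi(B)$, an interior critical point $x$ of $f_v$ only forces $\Phi(x)-v \perp \mathrm{im}(d\Phi_x)$, which, when $d\Phi_x$ is rank-deficient, allows a whole positive-dimensional affine set of such $v$ for each critical $x$; the set of $v$ whose closest point on $\Phi(B)$ is attained on the critical set is not controlled by Sard's theorem. (2) Even granting the lower bound $|\Phi(x_v)|\ge(1-C\eps)r$, the triangle inequality together with $|\Phi(x_v)-v|\le |v|$ yields only $2|v|\ge|\Phi(x_v)|$, i.e.\ $|v|\ge(1-C\eps)r/2$: that confines $v$ to a shell of thickness comparable to $r$, not $\eps r$, and so does not deliver the required bound $C(n,k)\eps r^k$ on the missed measure.
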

\proof By scaling,  there is no loss of generality in assuming $r=1$, what we do from now on. Set $B:=B_1(o)$ and 
$$w:=\sqrt{\det d\Phi ^t d\Phi}=\left|d\Phi_1\wedge\dots \wedge d\Phi_k\right|, $$ and recall that the coaera formula gives that for any $f\in L^1(B)$,
\begin{equation}\label{eq:coarea}\int_{B} f\di\nu_g=\int_{\R^k}\left(\int_{\Phi^{-1}(z)}\frac{f}{w}\di\cH^{n-k}\right)\di z.\end{equation}
Acting as in \cite{cheeger_structure_2000}, we introduce a function $J: \setR^k \to \setR_+$ which provides a weighted measure of the fibers $\Phi^{-1}(z)$.  Let $\chi\colon \R_+\rightarrow [0,1]$ be a smooth function such that $\chi=0$ on $[0,1/4]$ and $\chi=1$ on $[1/2,+\infty)$.  For any $z \in \setR^k$, set
$$J(z):=\int_{\Phi^{-1}(z)}\chi \circ w^2 \di\cH^{n-k}.$$
Note that if $z \notin \Phi(B)$, then $J(z)=0$.  Moreover, the presence of $\chi$ in the integrand ensures that the integral may be taken over $$\Sigma_z:=\Phi^{-1}(z) \cap \{w>0\}$$ which is a smooth $(n-k)$-dimensional submanifold of $B$. By \textit{ii)}, we know that $\Phi(B) \subset \mathbb{B}^k_{1+\eps}$. Therefore, by the Poincaré inequality,
\begin{equation}\label{eq:byPoincaré}\cH^k\left(\bB^k_{1+\eps}\setminus \Phi(B)\right) \bar J  \le \int_{\bB^k_{1+\eps}} \left|J(z)-\bar J\right| \di z \le C(n,k)\int_{\bB^k_{1+\eps}}|\nabla J(z)| \di z\end{equation}
where $$\bar J:=\fint_{\bB^k_{1+\eps}} J.$$
Let us estimate $|\nabla J(z)|$.  For any $v \in \setR^k$ and $x \in \{w>0\}$,  we let $X_v(x)$ be the unique element in $T_x\Sigma_{\Phi(x)}^{\perp}$ such that $d_x \Phi(X_v(x))=v$. Then there exists $\xi\colon B\rightarrow \R^k$ such that $\sum_{\alpha=1}^k\xi_\alpha \nabla \Phi_{\alpha}=X_v$. We easily get that $$|X_v|\le \frac{2^{k-1}}{w} |v|. $$
Moreover
\begin{equation}\label{GradJ}
\nabla_vJ(z)=\int_{\Sigma_z}(\chi' \circ w^2)\nabla_{X_v}w^2\di\cH^{n-k}+\int_{\Sigma_z}(\chi \circ w^2)\langle \vec H,X_v\rangle\di\cH^{n-k}\end{equation}
where $\vec H$ is the mean curvature vector of $\Sigma_z$. We easily compute that
$$\langle \vec H,X_v\rangle=-\sum_{\alpha=1}^k\sum_{i=1}^{n-k}\xi_\alpha\nabla d\Phi_\alpha(e_i,e_i)$$ where
$(e_1,\dots,e_{n-k})$ is an orthonormal basis of $T_x\Sigma_z$.
Hence
$$Â \left|\langle \vec H,X_v\rangle\right|\le C(n,k)\,  \frac{|\nabla d\Phi|}{w}\, |v|.$$
We also have
$$|\nabla_{X_v}w^2|\le C(n)|v| |\nabla d\Phi|.$$
Using the fact that integration in \eqref{GradJ} is done only on the set  $\{w\ge 1/2\}$, we get that 
 $$|\nabla J(z)|\le C(n,k)\int_{\Phi^{-1}(z)} \frac{|\nabla d\Phi|}{w}\di\cH^{n-k}.$$ Then the coarea formula \eqref{eq:coarea} applied to $|\nabla d \phi|$ together with \textit{iv)} yields that
\begin{equation}\label{eq:bycoarea}
\int_{\R^k}|\nabla J(z)|\di z \le C(n,k)\eps\, \nu_g (B).\end{equation}
Now we bound $\bar J$ from below. The coarea formula gives that
$$\bar J=\frac{1}{\omega_k} \int_{B} (\chi \circ w^2) \, w\di\nu_g\ge \frac{1}{\sqrt{2}\, \omega_k}\nu_g(\{w^2> 1/2\}).$$
But there is a constant $C(n)>0$ such that $|w^2-1|\le C(n) \left| d\Phi{}^td\Phi-\Id_k\right|$, so that
\begin{align*}\nu_g(\{w^2< 1/2\})& \le \nu_g \left(\left\{ \left|d\Phi{}^td\Phi-\Id_k\right|>1/(2C(n))\right\}\right)\\
& \le 2C(n) \int_{B}\left|d\Phi{}^td\Phi-\Id_k\right|\di\nu_g\le 2C(n)\nu_g( B) \eps.\end{align*}
As a consequence, if 
$ 4C(n)\eps<1$  then $\nu_g(\{w^2\ge 1/2\})\ge \frac12 \nu_g(B)$, so that
\begin{equation}\label{eq:loweronJ}
\bar J \ge \frac{1}{2 \sqrt{2} \omega_k} \nu_g(B).
\end{equation}
Combining \eqref{eq:byPoincaré}, \eqref{eq:bycoarea} and \eqref{eq:loweronJ},  we get 
$$ \cH^k\left(\bB^k_{1+\eps}\setminus \Phi(B)\right)\le C(n,k)\,\eps.$$
Finally,
$$\cH^k\left(\bB_1^k\setminus \Phi(B)\right)\le \cH^k\left(\bB^k_{1+\eps}\setminus \Phi(B)\right)+\cH^k\left(\bB^k_{1+\eps}\setminus\bB^k_1)\right)\le C(n,k)\,\eps+C(k)\eps.$$
 
\endproof

\bibliographystyle{alpha} 
\bibliography{CMT_Betti.bib}

\begin{thebibliography}{MMP22}

\bibitem[BG21]{braun2021heat}
Mathias Braun and Batu G{\"u}neysu.
\newblock Heat flow regularity, {B}ismut--{E}lworthy--{L}i's derivative
  formula, and pathwise couplings on {R}iemannian manifolds with {K}ato bounded
  {R}icci curvature.
\newblock {\em Electronic Journal of Probability}, 26:1--25, 2021.

\bibitem[BNS22]{BNS}
Elia Bru{\`e}, Aaron Naber, and Daniele Semola.
\newblock Boundary regularity and stability for spaces with {R}icci bounded
  below.
\newblock {\em Invent. Math.}, 228(2):777--891, 2022.

\bibitem[BPS21]{MR4277822}
Elia Bru{\`e}, Enrico Pasqualetto, and Daniele Semola.
\newblock Rectifiability of {R}{C}{D}({K},{N}) spaces via {$\delta$}-splitting
  maps.
\newblock {\em Ann. Fenn. Math.}, 46(1):465--482, 2021.

\bibitem[Car19]{C16}
Gilles Carron.
\newblock Geometric inequalities for manifolds with {R}icci curvature in the
  {K}ato class.
\newblock {\em Ann. Inst. Fourier (Grenoble)}, 69(7):3095--3167, 2019.

\bibitem[CC97a]{ChCo97}
Jeff Cheeger and Tobias~H. Colding.
\newblock On the structure of spaces with {R}icci curvature bounded below. {I}.
\newblock {\em J. Differential Geom.}, 46(3):406--480, 1997.

\bibitem[CC97b]{CheegerColdingI}
Jeff Cheeger and Tobias~H. Colding.
\newblock On the structure of spaces with {R}icci curvature bounded below. {I}.
\newblock {\em J. Differential Geom.}, 46(3):406--480, 1997.

\bibitem[CC00]{cheeger_structure_2000}
Jeff Cheeger and Tobias~H. Colding.
\newblock On the structure of spaces with {Ricci} curvature bounded below.
  {II}.
\newblock {\em Journal of Differential Geometry}, 54(1), January 2000.

\bibitem[CCT02]{CheegerColdingTian}
Jeff Cheeger, Tobias~H Colding, and Gang Tian.
\newblock On the singularities of spaces with bounded {R}icci curvature.
\newblock {\em Geometric \& Functional Analysis GAFA}, 12(5):873--914, 2002.

\bibitem[Che01]{ChPisa}
Jeff Cheeger.
\newblock {\em Degeneration of {R}iemannian metrics under {R}icci curvature
  bounds}.
\newblock Lezioni Fermiane. [Fermi Lectures]. Scuola Normale Superiore, Pisa,
  2001.

\bibitem[Che21]{Chen:2020aa}
Lina Chen.
\newblock {Segment Inequality and Almost Rigidity Structures for Integral Ricci
  Curvature}.
\newblock {\em International Mathematics Research Notices}, 04 2021.
\newblock rnab065.

\bibitem[CJN21]{CJN}
Jeff Cheeger, Wenshuai Jiang, and Aaron Naber.
\newblock Rectifiability of singular sets of noncollapsed limit spaces with
  {R}icci curvature bounded below.
\newblock {\em Ann. of Math. (2)}, 193(2):407--538, 2021.

\bibitem[CMT21]{CMT}
Gilles Carron, Ilaria Mondello, and David Tewodrose.
\newblock Limits of manifolds with a {K}ato bound on the {R}icci curvature.
\newblock ArXiV Preprint: 2102.05940, 2021.

\bibitem[CMT22]{CMT2}
Gilles Carron, Ilaria Mondello, and David Tewodrose.
\newblock Limits of manifolds with a {K}ato bound on the {R}icci curvature.
  {I}{I}.
\newblock ArXiV Preprint: 2205.01956, 2022.

\bibitem[Col97]{Col97}
Tobias~H. Colding.
\newblock Ricci curvature and volume convergence.
\newblock {\em Ann. of Math. (2)}, 145(3):477--501, 1997.

\bibitem[CR21]{CarronRose}
Gilles Carron and Christian Rose.
\newblock Geometric and spectral estimates based on spectral {R}icci curvature
  assumptions.
\newblock {\em J. Reine Angew. Math.}, 772:121--145, 2021.

\bibitem[CT22]{CT19}
Gilles Carron and David Tewodrose.
\newblock A rigidity result for metric measure spaces with {Euclidean} heat
  kernel.
\newblock {\em Journal de l{\textquoteright}\'Ecole polytechnique {\textemdash}
  Math\'ematiques}, 9:101--154, 2022.

\bibitem[CY81]{CheegerYau}
Jeff Cheeger and Shing~Tung Yau.
\newblock A lower bound for the heat kernel.
\newblock {\em Comm. Pure Appl. Math.}, 34(4):465--480, 1981.

\bibitem[Gal83]{Gallot2}
Sylvestre Gallot.
\newblock In\'{e}galit\'{e}s isop\'{e}rim\'{e}triques, courbure de {R}icci et
  invariants g\'{e}om\'{e}triques. {II}.
\newblock {\em C. R. Acad. Sci. Paris S\'{e}r. I Math.}, 296(8):365--368, 1983.

\bibitem[Gal88]{GallotInt}
Sylvestre Gallot.
\newblock Isoperimetric inequalities based on integral norms of {R}icci
  curvature.
\newblock Number 157-158, pages 191--216. 1988.
\newblock Colloque Paul L\'{e}vy sur les Processus Stochastiques (Palaiseau,
  1987).

\bibitem[Gal98]{Gallot}
Sylvestre Gallot.
\newblock Volumes, courbure de {R}icci et convergence des vari\'{e}t\'{e}s
  (d'apr\`es {T}. {H}. {C}olding et {C}heeger-{C}olding).
\newblock Number 252, pages Exp. No. 835, 3, 7--32. 1998.
\newblock S\'{e}minaire Bourbaki. Vol. 1997/98.

\bibitem[Gig15]{GigliMAMS}
Nicola Gigli.
\newblock On the differential structure of metric measure spaces and
  applications.
\newblock {\em Mem. Amer. Math. Soc.}, 236(1113):vi+91, 2015.

\bibitem[GM88]{GallotMeyer}
Sylvestre Gallot and Daniel Meyer.
\newblock D'un r{\'e}sultat hilbertien {\`a} un principe de comparaison entre
  spectres. applications.
\newblock {\em Ann. Scient. Ec. Norm. Sup.}, 21:561--591, 1988.

\bibitem[GP15]{MR3412360}
Batu G\"{u}neysu and Diego Pallara.
\newblock Functions with bounded variation on a class of {R}iemannian manifolds
  with {R}icci curvature unbounded from below.
\newblock {\em Math. Ann.}, 363(3-4):1307--1331, 2015.

\bibitem[GR18]{MR3814057}
Nicola Gigli and Chiara Rigoni.
\newblock Recognizing the flat torus among {$\text{RCD}^*(0,N)$} spaces via the
  study of the first cohomology group.
\newblock {\em Calc. Var. Partial Differential Equations}, 57(4):Paper No. 104,
  39, 2018.

\bibitem[Gro81]{Gromov1}
Mikhael Gromov.
\newblock {\em Structures m\'{e}triques pour les vari\'{e}t\'{e}s
  riemanniennes}, volume~1 of {\em Textes Math\'{e}matiques [Mathematical
  Texts]}.
\newblock CEDIC, Paris, 1981.
\newblock Edited by J. Lafontaine and P. Pansu.

\bibitem[HSC01]{HSC}
Waldemar Hebisch and Laurent Saloff-Coste.
\newblock On the relation between elliptic and parabolic {H}arnack
  inequalities.
\newblock {\em Ann. Inst. Fourier (Grenoble)}, 51(5):1437--1481, 2001.

\bibitem[Li80]{Li}
Peter Li.
\newblock On the sobolev constant and the $ p $-spectrum of a compact
  riemannian manifold.
\newblock In {\em Annales scientifiques de l'{\'E}cole Normale Sup{\'e}rieure},
  volume~13, pages 451--468, 1980.

\bibitem[MMP22]{Mondello:2021aa}
Ilaria Mondello, Andrea Mondino, and Raquel Perales.
\newblock An upper bound on the revised first {B}etti number and a torus
  stability result for {R}{C}{D} spaces.
\newblock {\em Comment. Math. Helv.}, 97(3):555--609, 2022.

\bibitem[MW19]{MR3987869}
Andrea Mondino and Guofang Wei.
\newblock On the universal cover and the fundamental group of an {${\rm
  RCD}^*(K,N)$}-space.
\newblock {\em J. Reine Angew. Math.}, 753:211--237, 2019.

\bibitem[PW97]{PW1}
Peter Petersen and Guofang Wei.
\newblock Relative volume comparison with integral curvature bounds.
\newblock {\em Geom. Funct. Anal.}, 7(6):1031--1045, 1997.

\bibitem[PW01]{PW2}
Peter Petersen and Guofang Wei.
\newblock Analysis and geometry on manifolds with integral {R}icci curvature
  bounds. {II}.
\newblock {\em Trans. Amer. Math. Soc.}, 353(2):457--478, 2001.

\bibitem[Ros19]{Rose}
Christian Rose.
\newblock Li-{Y}au gradient estimate for compact manifolds with negative part
  of {R}icci curvature in the {K}ato class.
\newblock {\em Ann. Global Anal. Geom.}, 55(3):443--449, 2019.

\bibitem[RS17]{RoseStollmann}
Christian Rose and Peter Stollmann.
\newblock The {K}ato class on compact manifolds with integral bounds on the
  negative part of {R}icci curvature.
\newblock {\em Proc. Amer. Math. Soc.}, 145(5):2199--2210, 2017.

\bibitem[RS20]{rose2018manifolds}
Christian Rose and Peter Stollmann.
\newblock Manifolds with {R}icci curvature in the {K}ato class: heat kernel
  bounds and applications.
\newblock {\em Analysis and Geometry on Graphs and Manifolds}, 461:76, 2020.

\bibitem[Rud73]{Rudin}
Walter Rudin.
\newblock {\em Functional analysis}.
\newblock McGrawHill, New York, 1973.

\bibitem[RW20]{rosewei}
Christian Rose and Guofang Wei.
\newblock Eigenvalue estimates for {K}ato-type {R}icci curvature conditions.
\newblock ArXiV Preprint: 2003.07075, 2020.

\end{thebibliography}
\end{document}